     \newcommand{\BN}{{\mathbb {N}}}
     \newcommand{\BR}{{\mathbb {R}}}
     \newcommand{\BT}{{\mathbb {T}}}
     \newcommand{\BZ}{{\mathbb {Z}}}
     \newcommand{\CJ}{{\mathcal {J}}}
    \newcommand{\frI}{{\mathfrak{I}}}
    \newcommand{\frC}{{\mathfrak{C}}}
    \newcommand{\frK}{{\mathfrak{K}}}
\def\-{^{-1}}
\def\sumx{\ \sideset{}{^\star}\sum}
\newtheorem{Theorem}{Theorem}[section]
\newtheorem{Lemma}[Theorem]{Lemma}
\newtheorem{Proposition}[Theorem]{Proposition}
\newtheorem{Remark}[Theorem]{Remark}
\begin{document}

\title{A new subconvex bound for $\rm GL(3)$ $L$-functions in the $t$-aspect}
 \author{Keshav Aggarwal}
\date{}
\address{Department of Mathematics and Statistics, University of Maine\\ 334 Neville Hall\\
Orono, Maine 04401, USA}
\email{keshav.aggarwal@maine.edu}

\begin{abstract}
We revisit Munshi's proof of the $t$-aspect subconvex bound for $\rm GL(3)$ $L$-functions, and we are able to remove the `conductor lowering' trick. This simplification along with a more careful stationary phase analysis allows us to improve Munshi's bound to,
$$ L(1/2+it, \pi) \ll_{\pi, \epsilon} (1+|t|)^{3/4-3/40+\epsilon}. $$
\end{abstract}
\subjclass[2010]{11F66, 11M41}
\keywords{Subconvexity, delta method, Voronoi summation}
\maketitle

\section{Introduction and statement of result}
Let $\pi$ be a Hecke cusp form of type $(\nu_1,\nu_2)$ for $\rm SL(3,\BZ)$. Let the normalized Fourier coefficients be given by $\lambda(m_1,m_2)$ (so that $\lambda(1,1)=1$). The $L$-series associated with $\pi$ is given by
$$L(s,\pi) = \sum_{n\geq1}\lambda(1,n)n^{-s}, \quad \text{ for } Re(s)>1.$$

In this paper, we follow Munshi \cite{Mun4} but remove the `conductor lowering' trick. This simplification makes the stationary phase analysis cleaner, which improves certain estimates and allows us to improve the subconvex bound exponent. We prove the following theorem.

\begin{Theorem}\label{main theorem gl3} 
Let $\pi$ be a Hecke-Maass cusp form for $SL(3,\BZ)$. Then for any $\epsilon>0$
\begin{equation*}
L\left(1/2+it,\pi\right)\ll_{\pi, \epsilon} t^{3/4-3/40+\epsilon}.
\end{equation*}
\end{Theorem}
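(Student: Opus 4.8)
The plan is to follow Munshi's delta-method approach to $\mathrm{GL}(3)$ subconvexity but streamline it by dropping the conductor-lowering device. After an approximate functional equation, the task reduces to bounding a sum of the shape $S = \sum_{n \sim N} \lambda(1,n) n^{-it}$ with $N \asymp t^{3/2}$, since the conductor of $L(1/2+it,\pi)$ is of size $t^3$. The convexity bound corresponds to square-root cancellation $S \ll N^{1/2}$, so I need to beat this by a power of $t$. The first step is to detect the equation $n = m$ (after introducing a second ``dummy'' variable $m \sim N$ carrying the oscillation $m^{it}$) using Munshi's variant of the circle method: the Duke--Friedlander--Iwaniec delta-symbol expansion, where $\delta(n-m)$ is written using additive characters modulo $q$ for $q$ in a range around $Q = \sqrt{N/K}$ (here the parameter $K$, previously tied to the conductor-lowering length, is now chosen purely to optimize). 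One keeps the $m^{it}$ oscillation separated from $n$ by inserting a smooth integral $\int V(x) (n/m)^{ix}\,dx$ over $|x| \sim K$, which costs a factor $K$ but decouples the variables.

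Next I would apply summation formulas to each variable separately. For the $n$-sum, which now looks like $\sum_n \lambda(1,n) e(an/q) n^{-i(t+x)}$, apply the $\mathrm{GL}(3)$ Voronoi summation formula; this transforms the sum of length $N$ into a dual sum of length roughly $q^3 (t)^{\ldots}/N$, and the crucial point is that with the right choice of $Q$ the dual length becomes short. For the $m$-sum, $\sum_m e(-am/q) m^{i(t+x)}$, apply Poisson summation modulo $q$, producing a dual variable and a character sum (a Gauss sum, essentially) together with an exponential integral that one evaluates by stationary phase. The stationary phase analysis here is where the removal of conductor lowering pays off: without the extra modulus the phase is cleaner and one gets a genuine asymptotic expansion rather than just an upper bound, so lower-order terms and the precise location of the stationary point can be tracked, tightening the final estimate.

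The heart of the matter is then the resulting expression after both summations: one is left with a sum over the dual $\mathrm{GL}(3)$ variable (call it $n$, now short), the Poisson dual variable, and the modulus $q$, weighted by Gauss sums and an oscillatory integral coming from the $x$-integral combined with the two stationary-phase phases. After opening the Gauss sum / character sum and using reciprocity, one arrives at a sum to which the Cauchy--Schwarz inequality is applied, keeping the $\mathrm{GL}(3)$ coefficients inside the ``smooth'' factor so they can be bounded by the Ramanujan-on-average bound $\sum_{n\sim X}|\lambda(1,n)|^2 \ll X^{1+\epsilon}$. The off-diagonal terms of the resulting quadratic form are estimated by a further application of Poisson summation (or by bounding the correlation sums / Kloosterman-type sums that appear), and the diagonal contributes the main term. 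Balancing the diagonal against the off-diagonal, and optimizing over $K$ and the length $Q$, should yield the savings $t^{-3/40}$ over convexity.

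The main obstacle I anticipate is the final Cauchy--Schwarz plus Poisson step: after Cauchy--Schwarz one must show that the diagonal in the dual quadratic form is not too large and that the off-diagonal genuinely exhibits extra cancellation when summed against the oscillatory integral. Getting the exponent $3/4 - 3/40$ rather than Munshi's weaker $3/4 - 1/16$ requires that the stationary-phase expansion of the $x$- and $m$-integrals be carried to sufficient precision that no power of $t$ is lost to crude triangle-inequality bounds — in particular, one must exploit the oscillation in the integral transform and the precise size of the conductor of the character sum, rather than bounding it trivially. A secondary technical point is ensuring the ranges of all auxiliary parameters ($q$, $x$, the dual variables) are compatible so that the Voronoi and Poisson duals are both in their ``post-transform'' short regime simultaneously; this is exactly the balancing that the conductor-lowering trick was originally introduced to finesse, and doing it directly is what both simplifies the analysis and, with care, improves the numerology.
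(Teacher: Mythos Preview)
Your overall architecture --- approximate functional equation, separation via a circle method, Voronoi on the $\lambda(1,n)$-sum, Poisson on the dummy sum, stationary phase, then Cauchy--Schwarz followed by a second Poisson --- matches the paper. But there is a genuine confusion at the structural core. You announce that you are ``dropping the conductor-lowering device'' and then, two sentences later, insert a smooth integral $\int V(x)(n/m)^{ix}\,dx$ over $|x|\sim K$ and write $Q=\sqrt{N/K}$ with $K$ as a free parameter. That integral \emph{is} Munshi's conductor-lowering trick; keeping it means you are simply reproducing Munshi's original argument, and the paper's improved exponent $3/4-3/40$ does not fall out of that setup by being ``more careful'' with stationary phase alone.

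The paper's actual innovation is a change of circle method: it replaces the DFI/conductor-lowering package by Kloosterman's version of the circle method (Lemma~\ref{circle method}), writing $\delta(n-r=0)$ as a sum over $1\le q\le Q<a\le q+Q$ with $(a,q)=1$ and an $x$-integral over $[0,1]$ coming from the Farey dissection. There is no auxiliary parameter $K$ and no $(n/m)^{ix}$ integral; the only free parameter is $Q$, optimized at $Q=N^{1/2}/t^{1/5}$. The paper notes explicitly that a plain uniform-partition circle method is not strong enough for $\mathrm{GL}(3)$, which is why Kloosterman's refinement is needed. The cleaner stationary-phase analysis you anticipate does occur, but it is enabled by this structural replacement, and the improvement over Munshi comes specifically from a sharper bound on the diagonal term $S^\flat$ (see Remark~\ref{main_remark}), not from the off-diagonal. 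So your sketch would need to replace the DFI-plus-$K$-integral setup by the Kloosterman delta symbol before the rest of the plan can deliver the stated exponent.
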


We should mention that one can modify our approach in \cite{Agg2018published} for a $t$-aspect subconvex bound for $\rm GL(2)$ $L$-functions to remove the `conductor lowering' trick. Indeed, it suffices to replace the delta method used in \cite{Agg2018published} by an averaged version of the following uniform partition of the circle to detect $n=0$
\begin{equation*}
\delta(n=0) = \frac{1}{q}\sum_{a\bmod q}e\bigg(\frac{an}{q}\bigg)\int_0^1 e\bigg(\frac{nx}{q}\bigg) dx, \quad \text{ for } q\in\BN.
\end{equation*}
By choosing $q$ to be of size approximately $t^{1/3}$, one can get the Weyl bound for $\rm GL(2)$ $L$-functions in $t$-aspect. This circle method seems insufficient to obtain a $\rm GL(3)$ $t$-aspect subconvex bound, so we use Kloosterman's version of the circle method (Lemma \ref{circle method}).

A $t$-aspect bound for self-dual $\rm GL(3)$ $L$-functions was first established by Li \cite{Li1}, and improved upon by McKee, Sun and Ye \cite{McKee-Sun-Ye}, Sun and Ye \cite{Sun-Ye-2019} and Nunes \cite{Nunes2017}. A $t$-aspect bound for a general $\rm SL(3,\BZ)$ $L$-function was proved by Munshi by a completely different approach. We revisit Munshi's proof and improve upon his result. Although the bound obtained here is weaker than that in \cite{McKee-Sun-Ye, Sun-Ye-2019, Nunes2017}, it holds for any Hecke-Maass cusp form for $\rm SL(3,\BZ)$, and not just the self-dual forms. We note that we get the same exponent as Sun and Zhao \cite{SunZhaoDepthAspect}, whose work is on bounding twists of $\rm GL(3)$ $L$-functions in depth aspect. They use Kloosterman's version of circle method, along with a `conductor lowering' trick appropriate for the depth aspect. On the other hand, our result in $t$-aspect doesn't need a `conductor lowering' trick. Other works that deal with the subconvex bound problem for degree three $L$-functions include \cite{Blo12, Mun1, Mun2, mun3, Mun4, Munshicrelle, Ho-Ne2018, Lin, Sun-Ye-2019}.

We start with applying the approximate functional equation (Lemma \ref{AFE})
\begin{equation*}
L(1/2+it, \pi) \ll \sup_{1\leq N\ll t^{3/2+\epsilon}} \frac{S(N)}{N^{1/2}},
\end{equation*}
where 
\begin{equation}\label{main object gl3}
\begin{split}
S(N)=\sum_{n=1}^{\infty}\lambda(1,n)\,n^{-it}V\left(\frac{n}{N}\right).
\end{split}\end{equation}
$V$ is a smooth function with bounded derivatives and supported in $[1,2]$, that is allowed to depend on $t$. An application of Cauchy-Schwarz inequality applied to $S(N)$ followed by the Ramanujan bound on average (Lemma \ref{Ram bound}) gives the trivial bound $S(N)\ll N^{1+\epsilon}$. Therefore it suffices to beat the trivial bound $O(N^{1+\epsilon})$ of $S(N)$ for $N$ in the range $t^{3/2-\delta}<N<t^{3/2+\epsilon}$ and some $\delta>0$.

The next step is to separate the oscillations by writing
\begin{equation*}
S(N) = \sum_{n=1}^\infty \sum_{r=1}^\infty \lambda(1,n)r^{-it}U\bigg(\frac{n}{N}\bigg)V\bigg(\frac{r}{N}\bigg)\delta(n-r=0),
\end{equation*}
where $U$ is a smooth function compactly supported on $[1/2, 5/2]$ and $U(x)=1$ for $x\in [1,2]$. We note that $U(x)V(x)=V(x)$. The separation of oscillations is done by using Kloosterman's version of the circle method.

\begin{Lemma}[Kloosterman circle method]\label{circle method} Let $Q\geq1$ and $n\in\BZ$. Then
\begin{equation*}
\delta(n=0) = 2Re\int_0^1\underset{1\leq q\leq Q<a\leq q+Q}{\sum\sumx} \frac{1}{aq}e\bigg(\frac{n\overline{a}}{q}-\frac{nx}{aq}\bigg)dx,
\end{equation*}
where $e(x)=e^{2\pi ix}$ and $\star$ on the summation denotes the coprimality condition $(a,q)=1$.
\end{Lemma}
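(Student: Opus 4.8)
The plan is to recover the right-hand side as the \emph{exact} Farey dissection of the elementary identity $\delta(n=0)=\int_0^1 e(n\theta)\,d\theta$, which is Kloosterman's refinement of the circle method. First I would partition $[0,1]$ by the Farey fractions of order $Q$, i.e. the rationals $a/q$ with $1\le q\le Q$ and $(a,q)=1$, including the endpoints $0/1$ and $1/1$. For two consecutive Farey fractions $\frac{a_1}{q_1}<\frac{a_2}{q_2}$ one has the classical relations $a_2q_1-a_1q_2=1$, $q_1,q_2\le Q$, and $q_1+q_2>Q$, and conversely every coprime pair $(q_1,q_2)$ with $q_1,q_2\le Q$ and $q_1+q_2>Q$ is realized by a unique such gap. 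I would then split each gap $\bigl[\frac{a_1}{q_1},\frac{a_2}{q_2}\bigr]$ at its mediant $\frac{a_1+a_2}{q_1+q_2}$ (whose denominator exceeds $Q$, so it lies strictly inside the gap) into a left half of length $\frac{1}{q_1(q_1+q_2)}$ and a right half of length $\frac{1}{q_2(q_1+q_2)}$, so that $\int_0^1 e(n\theta)\,d\theta$ becomes a sum over all Farey gaps $g$ of two half-integrals.

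On the right half $\bigl[\frac{a_1+a_2}{q_1+q_2},\frac{a_2}{q_2}\bigr]$ I would set $q:=q_2$, $a:=q_1+q_2$ (so $1\le q\le Q$, $Q<a\le q+Q$, $(a,q)=1$) and substitute $\theta=\frac{a_2}{q_2}-\frac{x}{aq}$ with $x$ decreasing from $1$ to $0$; from $a_2q_1\equiv1\pmod{q_2}$ one gets $a_2\equiv\overline{q_1}\equiv\overline{q_1+q_2}=\bar a\pmod q$, so $e(na_2/q_2)=e(n\bar a/q)$, and the right-half integral becomes exactly $\int_0^1\frac{1}{aq}\,e\!\left(\frac{n\bar a}{q}-\frac{nx}{aq}\right)dx$. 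On the left half $\bigl[\frac{a_1}{q_1},\frac{a_1+a_2}{q_1+q_2}\bigr]$ the analogous substitution $\theta=\frac{a_1}{q_1}+\frac{x}{aq}$ with now $q:=q_1$, $a:=q_1+q_2$, together with $-a_1q_2\equiv1\pmod{q_1}$, i.e. $a_1\equiv-\bar a\pmod q$, turns the integral into $\frac{1}{aq}e(-n\bar a/q)\int_0^1 e(nx/(aq))\,dx$, which is the complex conjugate of the right-half term attached to the gap whose neighbour-denominators are $(q_2,q_1)$ --- a gap that exists precisely because the conditions $q_1,q_2\le Q$, $(q_1,q_2)=1$, $q_1+q_2>Q$ are symmetric in $q_1,q_2$.

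Finally, since $(q_1,q_2)\mapsto(q_2,q_1)$ is an involution on Farey gaps, summing the left-half contributions and reindexing gives $\sum_g(\text{left half of }g)=\overline{\sum_g(\text{right half of }g)}$, hence $\delta(n=0)=\sum_g(\text{left}+\text{right})=2\,\mathrm{Re}\sum_g(\text{right half of }g)$; and the map $g\mapsto(q,a)=(q_2,q_1+q_2)$ is a bijection of Farey gaps onto $\{1\le q\le Q,\ Q<a\le q+Q,\ (a,q)=1\}$, which is exactly the asserted formula. The routine parts --- the linear substitutions, tracking orientations and signs, and the identity $\int_0^1 e(m\theta)\,d\theta=\delta(m=0)$ --- are immediate; the step requiring the most care is the combinatorial backbone: the standard Farey facts ($a_2q_1-a_1q_2=1$ with $q_1+q_2>Q$, and the gap$\leftrightarrow$pair bijection), a consistent treatment of the boundary fractions $0/1$ and $1/1$ (working modulo $1$, where all the exponentials equal $1$, removes the ambiguity), and verifying that the reindexed left halves reassemble into the conjugate of the sum of right halves so that a single real part captures both families.
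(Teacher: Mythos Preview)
Your proof is correct and follows the standard Farey dissection argument that underlies the cited reference; the paper itself does not give a proof but simply refers to \cite[Section 20.3]{Iw-Ko}, whose argument is precisely the one you have written out. Your treatment of the mediant split, the substitutions on the two halves, the congruence $a_2\equiv\bar a\pmod q$ (resp.\ $a_1\equiv-\bar a\pmod q$), and the involution on ordered denominator pairs is accurate, and your remark about handling the endpoints $0/1$ and $1/1$ by periodicity is exactly the right way to close the circle.
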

\begin{proof}
See \cite[Section 20.3]{Iw-Ko}.
\end{proof}

The next steps are to apply dual summation formulas to the $n$ and the $r$ sums, followed by stationary phase analysis of the oscillatory integrals, and a final application of Cauchy-Schwarz and Poisson summation to the $n$-sum. We give detailed heuristics of these calculations in Section \ref{proof sketch}. We prove the following main proposition.

\begin{Proposition}\label{main prop gl3}
Let $S(N)$ be given by equation \eqref{main object gl3} and $t^{1+\epsilon}<N\ll t^{3/2+\epsilon}$. Let $Q$ be a parameter satisfying $N^{1+\epsilon}/t<Q<N^{1/2}$. For any $\varepsilon>0$, we have
\begin{align*}
S(N) \ll_\varepsilon
  \frac{N^{3/2}t^\varepsilon}{Q^{3/2}} + QN^{1/4}t^{1/2+\varepsilon}  \ \ \ \textrm{if} \ \ \ \ t^{1+\epsilon}\ll N \ll t^{3/2+\epsilon}.
\end{align*}
\end{Proposition}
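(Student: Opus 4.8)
The plan is to carry out the argument outlined in Section~\ref{proof sketch}: detect $n=r$ with Kloosterman's circle method, dualize both of the resulting sums and simplify the integrals by stationary phase, and then recover the saving through a single Cauchy--Schwarz followed by Poisson summation in the $\mathrm{GL}(3)$ dual variable. First I would insert Lemma~\ref{circle method}, with its parameter set equal to the $Q$ of the Proposition, into
\[
S(N)=\sum_{n\ge1}\sum_{r\ge1}\lambda(1,n)\,r^{-it}\,U\!\left(\frac nN\right)V\!\left(\frac rN\right)\delta(n-r=0);
\]
after interchanging the order of summation and using $U(x)V(x)=V(x)$ this becomes
\[
S(N)=2\,\mathrm{Re}\int_0^1\ \underset{1\le q\le Q<a\le q+Q}{\sum\ \sumstar}\ \frac1{aq}\,\CN(a,q,x)\,\CR(a,q,x)\,dx,
\]
with $\CN=\sum_n\lambda(1,n)U(n/N)e\big(n\bar a/q-nx/(aq)\big)$, $\CR=\sum_r r^{-it}V(r/N)e\big(-r\bar a/q+rx/(aq)\big)$, and $a\asymp Q$ throughout. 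I would treat $e(\mp yx/(aq))$ as part of a smooth weight and $e(\pm y\bar a/q)$ as an additive character to the modulus $q$.

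Next I would dualize. Applying the $\mathrm{GL}(3)$ Voronoi summation formula to $\CN$ introduces a divisor parameter $n_1\mid q$, Kloosterman sums $S(\bar a,\pm n_2;q/n_1)$ in a new variable $n_2$, and a Hankel-type integral transform whose kernel oscillates like $e\big(\pm3(n_1^2n_2y/q^3)^{1/3}\big)$; evaluating this integral against $e(-yx/(aq))U(y/N)$ by stationary phase localizes $n_2$ to $n_2\ll N^2t^\varepsilon/(n_1^2Q^3)$, a range that is non-empty (indeed long) precisely because $Q<N^{1/2}$, with the dominant contribution from $n_1=1$ (larger $n_1$ only shortens every subsequent sum and is handled identically). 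For $\CR$ I would apply Poisson summation to the modulus $q$; the complete character sum pins the dual variable to the residue class $\bar a\bmod q$, and the surviving integral $\int y^{-it}V(y/N)e\big(y(x/(aq)-\bar a/q-\ell)\big)\,dy$ has a stationary point in the support of $V$ for only $O(1)$ values of $\ell$ \emph{because} $t/N<t^{-\epsilon}<1$ --- this is exactly where the hypothesis $t^{1+\epsilon}<N$ is used --- so that stationary phase gives $\CR\ll N^{1+\varepsilon}t^{-1/2}$ times a unimodular phase and arithmetic factor. After these steps and the $y$-integration in $\CN$, $S(N)$ has been transformed, up to $t^\varepsilon$ and an explicitly computable amplitude that I suppress, into a sum of the shape
\[
\sum_{n_1}\ \sum_{\substack{q\le Q\\ n_1\mid q}}\ \sumstar_{a\asymp Q}\ \int_0^1\ \sum_{n_2\ll N^2t^\varepsilon/(n_1^2Q^3)}\ \frac{\lambda(n_1,n_2)}{n_1n_2}\,S(\bar a,\pm n_2;q/n_1)\,e\big(\Psi(n_2,x,a,q)\big)\,dx .
\]

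Finally I would bring the $q$, $a$ and $x$ integrations inside and apply Cauchy--Schwarz to the $n_2$-sum so that the $\mathrm{GL}(3)$ coefficients are isolated; Lemma~\ref{Ram bound} bounds their contribution by
\[
\Big(\sum_{n_2\ll M}|\lambda(n_1,n_2)|^2\Big)^{1/2}\ll M^{1/2+\varepsilon},\qquad M:=\frac{N^2}{n_1^2Q^3}.
\]
On the other side I would open the square, obtaining a sum over $(q_1,a_1)$, $(q_2,a_2)$ and $n_2\ll M$ of $S(\bar a_1,\pm n_2;q_1/n_1)\,\overline{S(\bar a_2,\pm n_2;q_2/n_1)}\,e(\Psi_1-\Psi_2)$, and then evaluate the $n_2$-sum by Poisson summation to the modulus $C:=[q_1/n_1,q_2/n_1]\ll Q^2/n_1^2$; this produces a correlation of two Kloosterman sums together with a dual sum whose length is governed by $C$ and by $\partial_{n_2}(\Psi_1-\Psi_2)$. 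The zero frequency of this Poisson step is the diagonal: it forces $q_1=q_2$ and $a_1=a_2$ up to admissible congruences, and after inserting Weil's bound for the Kloosterman sums and reassembling the suppressed factors it contributes $N^{3/2}t^\varepsilon/Q^{3/2}$. The non-zero frequencies are controlled by a further stationary-phase analysis in the dual variable, again with Weil's bound, and contribute $QN^{1/4}t^{1/2+\varepsilon}$. Adding the two yields Proposition~\ref{main prop gl3}.

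I expect the main obstacle to be the stationary-phase bookkeeping in the two dualization steps: the phase $\Psi$ must be tracked through both the Hankel transform and the Poisson step precisely enough that, after opening the square, the final Poisson summation genuinely shortens the $n_2$-sum and the diagonal is correctly identified. In particular one has to check that the amplitude produced by the $\mathrm{GL}(3)$ integral transform balances exactly against the Rankin--Selberg input, so as to land on the two stated exponents, and that the constraints $N^{1+\epsilon}/t<Q<N^{1/2}$ and $t^{1+\epsilon}<N$ are precisely what the diagonal and off-diagonal estimates each require. A minor additional point is to confirm that the $n_1>1$ contributions are dominated by $n_1=1$, which is automatic since every dual length decreases as $n_1$ grows.
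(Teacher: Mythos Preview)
Your high-level strategy --- Kloosterman circle method, Voronoi on the $n$-sum, Poisson on the $r$-sum, then Cauchy--Schwarz and Poisson in the dual $\mathrm{GL}(3)$ variable with a diagonal/off-diagonal split --- matches the paper, and the target contributions $N^{3/2}t^\varepsilon/Q^{3/2}$ and $QN^{1/4}t^{1/2+\varepsilon}$ are the right ones.

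One genuine technical difference: the paper does \emph{not} use the Bessel-phase asymptotic $e\big(\pm 3(n_1^2n_2y/q^3)^{1/3}\big)$ for the Voronoi kernel. It keeps the Mellin form $H_\pm(y)=\frac{1}{2\pi i}\int_{(-1/2)}y^{-s}\gamma_\pm(s)\tilde h(-s)\,ds$ and carries the resulting $\tau$-integral (of length $\asymp N/(Qq)$) through the whole argument; the circle-method $x$-integral is evaluated first by stationary phase (their Lemma~\ref{sp lemma}), and after the final Poisson the off-diagonal becomes a two-variable oscillatory integral in $(\tau_1,\tau_2)$ bounded by the 2D second-derivative test (Lemma~\ref{double expo sum}). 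Your route --- evaluate the Hankel $y$-integral first and keep the $x$-integral --- is in principle dual to this, but the paper's improvement over Munshi sits precisely in that stationary-phase step (see Remark~\ref{main_remark}), so you would have to redo that analysis from scratch in your variables.

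There is also a gap in your Poisson step that would cost you the diagonal bound. You correctly note that for each fixed $(a,q)$ only $O(1)$ dual frequencies survive, but you then retain a full sum $\sumstar_{a\asymp Q}$ with $\sim q$ terms. In fact the stationary-phase survival condition forces $\bar a\pmod q$ into a window of length $\asymp qt/N$, so only $O(qt/N)$ values of $a$ contribute; the paper makes this explicit by trading the $a$-sum for an $r$-sum of length $\ll qt/N$ via the congruence $\bar a\equiv -r\pmod q$. If you sum trivially over all $a$ in the diagonal after Cauchy--Schwarz you lose a factor $(N/t)^{1/2}$ and do not recover $N^{3/2}/Q^{3/2}$. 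The ``explicitly computable amplitude'' you suppress must carry this restriction, and it must be tracked through the square. Relatedly, the zero frequency after the final Poisson is not handled by Weil's bound alone: one needs the exact evaluation of the Kloosterman-sum correlation (the paper's Lemma~\ref{character sum lemma}), which gives $\frC=0$ unless $q_1/n_1=q_2/n_1$ and then $\frC\ll (q_1/n_1)^2\big(q_1/n_1,\,r_1-r_2\big)$. That finer input, together with the hypothesis $N>t^{1+\epsilon}$ (used to make the $r_1=r_2$ term dominate), is what produces the diagonal term.
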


By choosing $Q=N^{1/2}/t^{1/5}$, we obtain the bound of $S(N)\ll_\varepsilon N^{3/4}t^{3/10+\varepsilon}$. Comparing this with the trivial bound of $S(N)\ll N^{1+\varepsilon}$, the optimal range of $N$ where the above proposition gives a better than the trivial estimate is $t^{6/5}<N<t^{3/2+\epsilon}$. For $N\ll t^{6/5}$, we use the trivial bound $S(N)\ll N^{1+\varepsilon}$. Taking the supremum of $\frac{S(N)}{N^{1/2}}$ over the range $N\ll t^{3/2+\epsilon}$, the proposition implies Theorem \ref{main theorem gl3}.

\subsection*{Notations} In the rest of the paper, we use the notation $e(x)=e^{2\pi ix}$. Let $a$ and $b$ be two positive real numbers. We denote $a\sim b$ to mean $k_1<a/b<k_2$ for some absolute constants $k_1,k_2>0$. We use $\epsilon$ to denote an arbitrarily small positive constant that can change depending on the context. We denote $a\asymp b$ to mean $t^{-\epsilon}b\ll_\epsilon a\ll_\epsilon t^\epsilon b$ asymptotically as $|t|\rightarrow \infty$. We must add that for brevity of notation, we assume $t>0$. Indeed, the same analysis holds for $t<0$ by replacing $t$ with $-t$ appropriately. 

\section{Outline of the Proof}\label{proof sketch}
In this section, we give detailed heuristics of the proof. We use Kloosterman's version of the circle method to separate the $n$ and the $r$ variables in $S(N)$. Roughly
\begin{equation}\label{start sketch}
S(N) \asymp \int_0^1\underset{1\leq q\leq Q<a\leq Q}{\sum\sumx} \frac{1}{aq} \sum_{r\asymp N}r^{-it} e(r\overline{a}/q) e(-rx/aq)\sum_{n\asymp N} A(n) e(-n\overline{a}/q)e(-nx/aq)dx,
\end{equation}
where $A(n)=\lambda(1,n)$. Trivial bound gives $S(N)\ll N^2$. So we need to save $N$ and a bit more. We start by applying dual summation formulas to the $r$-sum and the $n$-sum.
\subsubsection{Poisson to the $r$-sum} The conductor of the $r$-sum is $q(t+N/aq)$. If $Q\ll \sqrt{N/t}$, then $t<N/aq$, so that $q(t+N/aq)\asymp N/a$. The dual length after Poisson summation is then $1/Q\ll 1$. If $Q$ has size, that is if $Q>t^\epsilon$, only $r=0$ contributes. In this case the congruence condition $\delta(\overline{a}\equiv -r\bmod q)$ implies $q=1$ and $a=\lfloor{Q+1}\rfloor$. That is, both the $q$ and $a$ sums vanish, and all the contributions of additive twists due to the circle method become trivial. We are therefore not able to improve upon the convexity bound. We choose $Q$ so that $q(t+N/aq)\asymp qt$ for some $q\leq Q$. Let $U$ be a smooth bump function controlling $r\sim N$. Poisson summation transforms the $r$-sum as
\begin{equation}\label{Poisson sketch}
\sum_{r\geq1} e(r\overline{a}/q)r^{-it}e(-rx/aq)U(r/N) \longleftrightarrow N^{1-it}\sum_{|r|\ll qt/N} \delta(\overline{a}\equiv -r\bmod q)U^\dagger\bigg(\frac{N(ra-x)}{aq},1-it\bigg).    
\end{equation}
Thus $a$ is determined $\bmod\ q$ and $a\sim Q$. $U^\dagger(v,w)$ is a highly oscillatory integral that is negligible unless $|v|\asymp|w|$. We also observe that only $r=0$ exists if $Q<N/t^{1-\epsilon}$. So we choose $Q>N/t^{1-\epsilon}$.

\subsubsection{Voronoi to the $n$-sum} The conductor of the $n$-sum is $q^3(N/aq)^3$, so the new length after Voronoi formula is $N^2/Q^3$. Let $V$ be a smooth bump function controlling $n\sim N$. Voronoi summation transforms the $n$-sum as
\begin{equation}\label{Voronoi sketch}
\begin{split}
\sum_{n\geq1}A(n)e(nr/q)e(nx/aq)V(n/N) \longleftrightarrow N^{1/2}\sum_{n\ll \frac{N^2}{Q^3}} \frac{A^\star(n)}{\sqrt{n}}\frac{S(\overline{r},n;q)}{\sqrt{q}}&\int_{-N/qQ}^{N/qQ} \bigg(\frac{nN}{q^3}\bigg)^{-i\tau} \gamma(-1/2+i\tau)\\&\times V^\dagger(Nx/aq,1/2+i\tau)d\tau.
\end{split}
\end{equation}
where we have assumed $Q^2<N$ and $A^\star(n)=\lambda(n,1)$. Combining \eqref{Poisson sketch} and \eqref{Voronoi sketch}, \eqref{start sketch} transforms into
\begin{equation}\label{after dual sum sketch}
S(N)\asymp N^{3/2}\sum_{n\ll\frac{N^2}{Q^3}}\frac{A^\star(n)}{\sqrt{n}}\sum_{1\leq q\leq Q}\frac{1}{aq}\sum_{|r|\ll \frac{qt}{N}}\frac{S(\overline{r},n;q)}{\sqrt{q}} I(n,r,q),
\end{equation}
where $I(n,r,q)$ is a highly oscillatory double integral over $x,\tau$. 

\subsubsection{Analysis of the integrals} A major part of the paper is performing a robust stationary phase analysis of $I(n,r,q)$. We expect a square root saving of `the size of the oscillation'. $U^\dagger$ in \eqref{Poisson sketch} has oscillation of size $t$. Thus
\begin{equation}\label{Udagger sketch}
U^\dagger\bigg(\frac{N(ra-x)}{aq},1-it\bigg) \asymp \frac{1}{\sqrt{t}}\bigg(\frac{N(ra-x)}{aqt}\bigg)^{it}\times\text{smooth fn}.
\end{equation}
Similarly, $V^\dagger$ in \eqref{Voronoi sketch} has oscillation of size $\tau\asymp Nx/Qq$. Thus
\begin{equation}\label{Vdagger sketch}
V^\dagger(Nx/aq, 1/2+i\tau)\asymp \frac{1}{\sqrt{\tau}}\bigg(\frac{Nx}{aq\tau}\bigg)^{i\tau}\times\text{smooth fn}.
\end{equation}
From \eqref{Udagger sketch} and \eqref{Vdagger sketch}, the $x$-integral in $I(n,r,q)$ is therefore
\begin{equation*}
\int_0^1 (ra-x)^{it}x^{i\tau}\times\text{smooth fn} \asymp \frac{1}{\sqrt{N/Qq}}\times\tau\text{-oscillations},
\end{equation*}
where the above asymptotic comes from the observation that the $x$-oscillation is of size $\tau$, which is at most $N/Qq$. The biggest contribution comes from the range $\tau\asymp N/Qq$. Therefore \eqref{after dual sum sketch} is
\begin{equation}\label{after sp sketch}
S(N)\asymp \frac{N^{1/2}}{t^{1/2}} \sum_{n\ll\frac{N^2}{Q^3}}\frac{A^\star(n)}{\sqrt{n}}\sum_{1\leq q\leq Q}\sum_{|r|\ll \frac{qt}{N}} \frac{S(\overline{r},n;q)}{\sqrt{q}} \int_{|\tau|\asymp N/qQ} g(q,r,\tau) n^{-i\tau} d\tau,
\end{equation}
where $g(q,r,\tau)=O(1)$ and has $\tau$-oscillations. Recalling the restriction $Q^2<N$, the above gives the bound $S(N)\ll \frac{N^{3/2}t^{1/2}}{Q^{3/2}}\asymp N^{3/4}t^{1/2} $. Since $N\ll t^{3/2}$, this gives us the bound of $L(1/2+it,\pi)\ll t^{7/8+\epsilon}$. We therefore need to save $t^{1/8}$ and a bit more.

\subsubsection{Cauchy-Schwarz and Poisson to the $n$-sum} The biggest contribution in \eqref{after sp sketch} comes from the largest $n$, that is $n\asymp N^2/Q^3$. Applying Cauchy-Schwarz inequality with the $n$-sum outside, followed by the Ramanujan bound on average (Lemma \ref{Ram bound})
\begin{equation*}
S(N)\ll \frac{N^{1/2}}{t^{1/2}}\bigg(\sum_{n\asymp\frac{N^2}{Q^3}}\bigg|\sum_{1\leq q\leq Q}\sum_{|r|\ll \frac{qt}{N}} \frac{S(\overline{r},n;q)}{\sqrt{q}} \int_{-N/qQ}^{N/qQ} g(q,r,\tau) n^{-i\tau} d\tau\bigg|^2\bigg)^{1/2}.
\end{equation*}
Opening the absolute value squared, the $n$-sum inside the square root is
$$\sum_{n\asymp\frac{N^2}{Q^3}}\frac{S(\overline{r}_1,n;q_1)}{\sqrt{q_1}} \frac{S(\overline{r}_2,n;q_2)}{\sqrt{q_2}}n^{-i(\tau_2-\tau_1)}U\bigg(\frac{n}{N^2/Q^3}\bigg). $$
Application of Poisson summation to the $n$-sum gives the new length, $\frac{q^2(N/qQ)}{N^2/Q^3}\asymp Q^2q/N$, where $q_1,q_2\sim q$. The above sum transforms into
\begin{equation}\label{final poisson sketch}
\frac{N^2}{Q^3}\sum_{|n|\ll \frac{Q^2q}{N}}\frac{1}{q_1q_2}\frC U^\dagger\bigg(\frac{N^2n}{q_1q_2Q^3}, 1-i(\tau_1-\tau_2)\bigg),
\end{equation}
where $\frC$ is a character sum. When $n=0$, $\frC$ vanishes unless $q_1=q_2$ and $r_1=r_2$, in which case $\frC\ll q_1q_2$. Moreover, $U^\dagger$ is negligible unless $\tau_1\asymp\tau_2$. In that case, $U^\dagger(0,\star)\ll 1$. Therefore the contribution of $n=0$ towards $S(N)$ is
\begin{equation}\label{Sflat sketch}
S^\flat(N)\ll \frac{N^{1/2}}{t^{1/2}}\bigg(\sum_{1\leq q\leq Q}\sum_{|r|\ll qt/N} \frac{|\frC|}{q^2} \frac{N^2}{Q^3}\int_{-N/Qq}^{N/Qq}|g(q,r,\tau)|^2 d\tau \bigg)^{1/2}\ll \frac{N^{3/2}}{Q^{3/2}}.
\end{equation}
When $n\neq0$, $\frC\ll \sqrt{q_1q_2}$, and the $U^\dagger$ as given in \eqref{final poisson sketch} is asymptotic to
\begin{equation*}
U^\dagger\bigg(\frac{N^2n}{q_1q_2Q^3}, 1-i(\tau_1-\tau_2)\bigg)\asymp \bigg(\frac{q_1q_2Q^3}{N^2n}\bigg)^{1/2}h(\tau_1,\tau_2),
\end{equation*}
where $h(\tau_1,\tau_2)$ is an oscillatory function of size $1$. We again expect square root cancellation in the $\tau_1,\tau_2$-integral. That is
$$\underset{|\tau_i|\asymp N/Qq_i}{\int\int}g(q_1,r_1,\tau_1)\overline{g(q_2,r_2,\tau_2)}h(\tau_1,\tau_2)d\tau_1\ d\tau_2\ll \frac{N}{Q\sqrt{q_1q_2}}. $$
Therefore the contribution of the $n\neq0$ terms towards $S(N)$ is
\begin{equation*}
S^\sharp(N)\ll \frac{N^{1/2}}{t^{1/2}}\bigg(\underset{1\leq q_1,q_2\leq Q}{\sum\sum}\  \underset{|r_i|\ll q_it/N}{\sum\sum} \frac{|\frC|}{q_1q_2}\frac{N^2}{Q^3}\frac{N}{Qq}\sum_{0\neq|n|\ll Q^2q/N}\frac{Q^{3/2}\sqrt{q_1q_2}}{n^{1/2}}\bigg)^{1/2}\ll QN^{1/4}t^{1/2}.
\end{equation*}
Matching up the contributions of $S^\flat(N)$ and $S^\sharp(N)$, the optimal choice of $Q$ is $Q=N^{1/2}/t^{1/5}$. In this case, we get $S(N)\ll N^{3/4}t^{3/10+\epsilon}$. Since $N\ll t^{3/2+\epsilon}$, we get the bound $L(1/2+it,\pi)\ll_{\pi,\epsilon} (1+|t|)^{3/4-3/40+\epsilon}$.

We compare this with Munshi's heuristics. Munshi chooses $Q=\sqrt{N/K}$ with $t^{1/4}<K<t^{1/2}$ for some parameter $K$. Since we choose $Q=N^{1/2}/t^{1/5}$, our choice of $Q$ satisfies Munshi's restriction. We should also mention that our improvement comes in the step of stationary phase analysis of $I(n,r,q)$, which allows us to improve the bound for $S^\flat(N)$ as given in \eqref{Sflat sketch}. This is mentioned in Remark \ref{main_remark}. Indeed, our bound for $S^\sharp(N)$ matches that of Munshi.

Moreover, we observe that one needs $Q$ to be bigger than $N/t$ after Poisson summation, and smaller than $N^{1/2}$ after Voronoi summation for this approach to work. In the $\rm GL(n)$ $t$-aspect problem, say we choose $Q=N^{1/2}/t^{\alpha_n}$ for some $\alpha_n>0$. The restriction $N/t<Q<N^{1/2}/t^{\alpha_n}$ implies our analysis is valid in the range $N<t^{2-2\alpha_n}$. Since $N\ll t^{n/2}$ we are forced to choose $\alpha_n\leq0$ for $n\geq4$. Therefore this approach doesn't give a subconvex bound result for $n\geq4$.

\section{Preliminaries on automorphic forms}

Let $\pi$ be a Maass form of type $(\nu_1,\nu_2)$ for $\rm SL(3,\BZ)$, which is an eigenfunction for all the Hecke operators. Let the Fourier coefficients be $\lambda(n_1,n_2)$, normalized so that $\lambda(1,1)=1$. The Langlands parameter $(\alpha_1,\alpha_2, \alpha_3)$ associated with $\pi$ are $\alpha_1=-\nu_1-2\nu_2+1,\ \alpha_2= -\nu_1+\nu_2,\ \alpha_3= 2\nu_1+\nu_2-1$. The dual cusp form $\tilde{\pi}$ has Langlands parameters $(-\alpha_3, -\alpha_2, -\alpha_1)$. $L(s,\pi)$ satisfies a functional equation 
$$\gamma(s,\pi)L(s,\pi) = \gamma(s,\tilde{\pi})L(1-s,\tilde{\pi}), $$
where $\gamma(s,\pi)$ and $\gamma(s,\tilde{\pi})$ are the associated gamma factors. We refer the reader to Goldfeld's book on automorphic forms for $\rm GL(n)$ \cite{goldfeldbook} for the theory of automorphic forms on higher rank groups.

\subsection{Approximate functional equation and Voronoi summation formula}
We are interested in bounding $L(s,\pi)$ on the critical line, $Re(s)=1/2$. For that, we approximate $L(1/2+it, \pi)$ by a smoothed sum of length $t^{3/2+\epsilon}$. This is known as the approximate functional equation and is proved by applying Mellin transformation to $f$ followed by using the above functional equation.

\begin{Lemma}[{\cite[Theorem 5.3]{Iw-Ko}}]\label{AFE}
Let $G(u)$ be an even, holomorphic function bounded in the strip $-4\leq Re(u)\leq 4$ and normalized by $G(0)=1$. Then for $s$ in the strip $0\leq \sigma\leq 1$ 
$$ L(s,\pi) = \sum_{n\geq1} \lambda(1,n)n^{-s}V_s(n) +  \frac{\gamma(s,\tilde{\pi})}{\gamma(s,\pi)} \sum_{n\geq1} \overline{\lambda(1,n)}n^{-(1-s)}V_{1-s}(n) - R,$$
where $R\ll 1$,
$$V_s(y) = \frac{1}{2\pi i}\int_{(3)}y^{-u}G(u)\frac{\gamma(s+u,\tilde{\pi})}{\gamma(s,\pi)}\frac{du}{u}, $$
and $\gamma(s,\pi)$ is a product of certain $\Gamma$-functions appearing in the functional equation of $L(s,f)$.
\end{Lemma}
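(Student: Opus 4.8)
The plan is to establish the approximate functional equation by the classical Mellin-transform and contour-shift argument, which is the proof of \cite[Theorem 5.3]{Iw-Ko} specialized to the present $\mathrm{GL}(3)$ setting. The ingredients I would use are: $L(s,\pi)$ is entire (since $\pi$ is cuspidal) and of polynomial growth in vertical strips; the completed $L$-function $\Lambda(s,\pi)=\gamma(s,\pi)L(s,\pi)$ is entire and satisfies a functional equation relating it to $\Lambda(1-s,\tilde\pi)$, with $\lambda_{\tilde\pi}(1,n)=\overline{\lambda(1,n)}$; and $G$ is even, holomorphic and bounded on $-4\le Re(u)\le4$ with $G(0)=1$, which legitimizes all the contour moves and interchanges of summation with integration below.

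First I would observe that, since $Re(s+u)=\sigma+3>1$ on the line $Re(u)=3$ defining $V_s$, inserting the Dirichlet series $L(s+u,\pi)=\sum_{n\ge1}\lambda(1,n)n^{-(s+u)}$ and interchanging the absolutely convergent sum with the integral identifies the first term of the Lemma with a single Mellin integral,
\begin{equation*}
\sum_{n\ge1}\lambda(1,n)\,n^{-s}V_s(n)=\frac{1}{2\pi i}\int_{(3)}\frac{\Lambda(s+u,\pi)}{\gamma(s,\pi)}\,G(u)\,\frac{du}{u}=:I,
\end{equation*}
where the ratio of gamma factors occurring in $V_s$ has been folded into $\Lambda(s+u,\pi)=\gamma(s+u,\pi)L(s+u,\pi)$ (that ratio being, as it must, normalized to equal $1$ at $u=0$).

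Next I would shift the contour in $I$ from $Re(u)=3$ down to $Re(u)=-3$. Since $\Lambda(s+u,\pi)$ is entire in $u$ and $G$ decays rapidly on vertical lines, the horizontal segments at height $\pm T$ are $O_A(T^{-A})$ for every $A$ and drop out, and the only pole crossed is the simple pole of $u^{-1}$ at $u=0$, of residue $\Lambda(s,\pi)G(0)/\gamma(s,\pi)=L(s,\pi)$. In the integral that remains on $Re(u)=-3$ I would substitute $u\mapsto-u$ (allowed since $G$ is even), apply the functional equation to rewrite $\Lambda(s-u,\pi)$ in terms of $\Lambda(1-s+u,\tilde\pi)=\gamma(1-s+u,\tilde\pi)L(1-s+u,\tilde\pi)$, collect the resulting archimedean factors into the combination defining $V_{1-s}$, and then expand $L(1-s+u,\tilde\pi)=\sum_{n\ge1}\overline{\lambda(1,n)}n^{-(1-s+u)}$ (legitimate since $Re(1-s+u)=1-\sigma+3>1$) and interchange once more. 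This produces, up to the overall constant $\gamma(s,\tilde\pi)/\gamma(s,\pi)$, the dual sum $\sum_{n\ge1}\overline{\lambda(1,n)}n^{-(1-s)}V_{1-s}(n)$ appearing in the Lemma. The term $R$, present in the general form of the statement to accommodate a possible pole of $L(s,\pi)$, vanishes here ($\pi$ being cuspidal) and in any case is $O(1)$; collecting the main term $L(s,\pi)$, the dual sum, and $R$ and rearranging yields the assertion.

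The only step calling for genuine care rather than routine bookkeeping is the contour shift itself: one must confirm that $\Lambda(s+u,\pi)/\gamma(s,\pi)$ together with $G(u)$ decays fast enough on the horizontal pieces to justify pushing the line all the way to $Re(u)=-3$, uniformly for $0\le\sigma\le1$ as $|t|\to\infty$. Everything else — the two Dirichlet-series expansions and the rearrangement of the gamma factors into $V_{1-s}$, including the precise shape of the prefactor $\gamma(s,\tilde\pi)/\gamma(s,\pi)$ and the root number — is a routine, if slightly tedious, computation.
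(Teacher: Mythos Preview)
Your proposal is correct and is precisely the standard Mellin--contour-shift argument of \cite[Theorem~5.3]{Iw-Ko}, which is exactly what the paper invokes; the paper does not give an independent proof but simply cites that reference, so your sketch matches the intended argument.
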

\begin{Remark}
On the critical line, Stirling's approximation to $\gamma(s,\pi)$ followed by integration by parts to the integral representation of $V_{1/2\pm it}(n)$ gives arbitrary saving for $n\gg (1+|t|)^{3/2+\epsilon}$.
\end{Remark}

One of the main tools in our proof is a Voronoi type formula for $\rm GL(3)$. Let $h$ be a compactly supported smooth function on  $(0, \infty)$, and let $\tilde{h}(s)=\int_0^\infty h(x)x^{s-1}\ dx$ be its Mellin transform. For $\sigma>-1+\max\{-Re(\alpha_1), -Re(\alpha_2), -Re(\alpha_3)\}$ and $\ell=0, 1$, define
\begin{equation*}
\gamma_\ell(s) = \frac{\pi^{-3s-3/2}}{2}\prod_{i=1}^3\frac{\Gamma(\frac{1+s+\alpha_i+\ell}{2})}{\Gamma(\frac{-s-\alpha_i+\ell}{2})}.
\end{equation*}
Further set $\gamma_\pm(s) = \gamma_0(s)\mp i\gamma_1(s)$ and let
\begin{equation*}
H_\pm(y) = \frac{1}{2\pi i}\int_{(\sigma)} y^{-s}\gamma_\pm(s)\tilde{h}(-s)\ ds. 
\end{equation*}
We need the following Voronoi type formula (See \cite{Li1, Miller-Schmid}).

\begin{Lemma}\label{gl3voronoi}
Let $h$ be a compactly supported smooth function on $(0, \infty)$. We have
\begin{equation*}
\sum_{n=1}^\infty \lambda(1,n) e(an/q)h(n) = q\sum_\pm \sum_{n_1|q}\sum_{n_2=1}^\infty \frac{\lambda(n_2,n_1)}{n_1n_2} S(\overline{a}, \pm n_2; q/n_1) H_\pm (n_1^2n_2/q^3),
\end{equation*}
where $(a,q)=1$ and $a\overline{a}\equiv 1\bmod q$.
\end{Lemma}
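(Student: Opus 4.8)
The plan is to prove the $\mathrm{GL}(3)$ Voronoi summation formula (Lemma \ref{gl3voronoi}) by the standard route via the functional equation of the additively twisted $L$-function, following Goldfeld--Li and Miller--Schmid. First I would fix $a,q$ with $(a,q)=1$ and introduce, for $\Re(s)$ large, the twisted Dirichlet series $D(s,\bar a/q)=\sum_{n\geq1}\lambda(1,n)e(an/q)n^{-s}$ together with its companion $\widetilde D(s,\bar a/q)=\sum_{n\geq1}\lambda(n,1)e(\bar an/q)n^{-s}$ (and the $\pm$ variants coming from $\pm n$ in the twist). By Mellin inversion, $\sum_n\lambda(1,n)e(an/q)h(n)=\frac{1}{2\pi i}\int_{(\sigma_0)}D(s,\bar a/q)\widetilde h(1-s)\,ds$ for $\sigma_0$ to the right of absolute convergence, so everything reduces to (i) establishing meromorphic continuation of $D(s,\bar a/q)$ to all of $\mathbb{C}$, (ii) proving the precise functional equation relating $D(s,\bar a/q)$ to $\sum_\pm\sum_{n_1\mid q}\sum_{n_2}\frac{\lambda(n_2,n_1)}{n_1n_2}S(\bar a,\pm n_2;q/n_1)\,(\text{shifted }n_1^2n_2/q^3\text{ series})$ with the explicit gamma-factor ratio $\gamma_\pm(s)$, and (iii) shifting the contour in the Mellin integral from $\Re(s)=\sigma_0$ to $\Re(s)=\sigma$ (with no residues, since $h$ is compactly supported in $(0,\infty)$ so $D$ times the relevant gamma factors is entire there), substituting the functional equation, and recognizing the resulting integral as exactly $q\sum_\pm\sum_{n_1\mid q}\sum_{n_2}\frac{\lambda(n_2,n_1)}{n_1n_2}S(\bar a,\pm n_2;q/n_1)H_\pm(n_1^2n_2/q^3)$ by the definition of $H_\pm$.

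For step (ii), the key input is the functional equation of the $\mathrm{GL}(3)$ $L$-function of $\pi$ twisted by a primitive Dirichlet character, combined with the fact that an additive character $e(an/q)$ can be written, via Gauss sums, as a finite linear combination of primitive characters to moduli dividing $q$: $e(an/q)=\sum_{d\mid q}\frac{1}{\phi(q/d)}\sum_{\chi\bmod q/d\ \mathrm{prim}}\chi(\text{something})\tau(\bar\chi)\chi(n)$ type identity (this is precisely what produces the divisor sum $n_1\mid q$ and the Kloosterman/Ramanujan sums $S(\bar a,\pm n_2;q/n_1)$ in the end). Applying the character-twisted functional equation $\Lambda(s,\pi\otimes\chi)=\epsilon(\pi\otimes\chi)\Lambda(1-s,\tilde\pi\otimes\bar\chi)$ term by term, repackaging the root numbers $\epsilon(\pi\otimes\chi)$ (which are explicit powers of $\tau(\chi)$) and resumming over $\chi$, one reconstitutes the dual additive twist by $S(\bar a,\pm n_2;q/n_1)$; the Langlands parameters $\alpha_1,\alpha_2,\alpha_3$ of $\pi$ (equivalently $-\alpha_3,-\alpha_2,-\alpha_1$ of $\tilde\pi$) enter through the archimedean factors, giving exactly $\gamma_\ell(s)=\frac{\pi^{-3s-3/2}}{2}\prod_i\frac{\Gamma((1+s+\alpha_i+\ell)/2)}{\Gamma((-s-\alpha_i+\ell)/2)}$ for the two parity classes $\ell=0,1$, and the combination $\gamma_\pm=\gamma_0\mp i\gamma_1$ sorts out the $\pm n_2$ on the dual side. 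The convergence of the triple sum $\sum_{n_1\mid q}\sum_{n_2}\lambda(n_2,n_1)n_1^{-1}n_2^{-1}S(\bar a,\pm n_2;q/n_1)H_\pm(n_1^2n_2/q^3)$ is guaranteed because, for $\sigma$ chosen sufficiently negative and $h$ smooth compactly supported, Stirling gives rapid decay of $\gamma_\pm(s)\widetilde h(1-s)$ along vertical lines, so $H_\pm(y)$ decays faster than any power of $y$.

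The main obstacle is step (ii): assembling the character-twisted functional equations into the clean additive-twist form with the correct arithmetic factor $S(\bar a,\pm n_2;q/n_1)$ and the correct normalization of root numbers and $\Gamma$-factors. This is a bookkeeping-heavy computation — one must track Gauss sums $\tau(\chi)$, handle imprimitive characters carefully (since $a/q$ need not be in lowest terms after the twist-by-character decomposition, even though we may assume $(a,q)=1$), and match the archimedean local factors against the stated $\gamma_\ell$. A cleaner alternative, which I would adopt to avoid reproving this from scratch, is to invoke the automorphic distribution / Miller--Schmid machinery or the Goldfeld--Li derivation directly: the formula as stated is precisely \cite[Theorem 2 or equivalent]{Li1} combined with \cite{Miller-Schmid}, so the proof reduces to citing that the twisted $L$-function $D(s,\bar a/q)$ has analytic continuation and the stated functional equation (which is where the real content lies and is already in the literature), and then the only remaining work is the elementary Mellin-transform manipulation of step (iii), which is routine. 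I would present the argument in that streamlined form: recall the twisted functional equation as a quoted lemma, then carry out the contour shift and the identification of $H_\pm$ explicitly.
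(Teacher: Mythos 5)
Your proposal ends up in the same place as the paper: the paper states this lemma with no proof at all, simply citing Li \cite{Li1} and Miller--Schmid \cite{Miller-Schmid}, and your streamlined version likewise defers the substantive content (the meromorphic continuation and functional equation of the additively twisted series) to those references, keeping only the routine Mellin inversion and contour shift. The one caution is that the Gauss-sum/character-decomposition sketch in your middle paragraph is precisely the step that is delicate for composite $q$ (imprimitive characters and ramified twists), which is why the general-modulus formula is normally quoted from \cite{Miller-Schmid} rather than rederived; since you ultimately cite it, this does not affect the correctness of your plan.
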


Stirling approximation of $\gamma_{\pm}(s)$ gives $\gamma_\pm(\sigma+i\tau)\ll 1+|\tau|^{3\sigma+3/2}$. Moreover on $Re(s)=-1/2$
\begin{equation}\label{stirling gl3}
\gamma_\pm(-1/2+i\tau) = (|\tau|/e\pi)^{3i\tau}\Phi_\pm(\tau), \qquad \text{ where } \Phi_\pm'(\tau)\ll |\tau|^{-1}.
\end{equation}

We will also use the Ramanujan bound on average which follows from the Rankin-Selberg theory.
\begin{Lemma}[Ramanujan bound on average] \label{Ram bound}
We have
\begin{equation*}
    \underset{n_1^2n_2\leq x}{\sum\sum}|\lambda(n_2,n_1)|^2 \ll_{\pi,\epsilon} x^{1+\epsilon}.
\end{equation*}
\end{Lemma}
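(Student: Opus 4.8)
The plan is to deduce the bound from the analytic behaviour of the Rankin--Selberg $L$-function $L(s,\pi\times\tilde{\pi})$ combined with Rankin's positivity trick. First I would introduce the Dirichlet series
\begin{equation*}
D(s)=\sum_{n_1=1}^{\infty}\sum_{n_2=1}^{\infty}\frac{|\lambda(n_2,n_1)|^2}{(n_1^2n_2)^s},
\end{equation*}
whose summands, weighted by the $\mathrm{GL}(3)$ ``conductor'' $n_1^2n_2$, are precisely the quantities we must count. The $\mathrm{GL}(3)\times\mathrm{GL}(3)$ Rankin--Selberg theory (unfolding $|\pi|^2$ against the minimal parabolic Eisenstein series on $\mathrm{SL}(3,\BZ)$ and computing local Euler factors via the $\mathrm{GL}(3)$ Hecke multiplicativity relations, as in \cite{goldfeldbook,Li1}) yields an identity of the form $D(s)=L(s,\pi\times\tilde{\pi})\,R(s)$, where $R(s)$ is a fixed elementary factor built from the Riemann zeta function (namely $R(s)=\zeta(3s)^{-1}$ for $\mathrm{SL}(3,\BZ)$; only the properties used below matter). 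Since $\pi$ is cuspidal, $L(s,\pi\times\tilde{\pi})$ is holomorphic on $Re(s)>1$ — its sole pole, at $s=1$, being simple — and $R(s)$ is holomorphic and nonvanishing on $Re(s)>1$, so $D(s)$ extends holomorphically to the half-plane $Re(s)>1$. Because the Dirichlet series defining $D$ has nonnegative coefficients, Landau's theorem then forces its abscissa of convergence to be $\le 1$, so that $D(\sigma)<\infty$ for every real $\sigma>1$.

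Granting this, the bound is immediate from positivity. Fix $\epsilon>0$. On the range $n_1^2n_2\le x$ one has $1\le\big(x/(n_1^2n_2)\big)^{1+\epsilon}$, so term by term
\begin{equation*}
\underset{n_1^2n_2\le x}{\sum\sum}|\lambda(n_2,n_1)|^2
\le x^{1+\epsilon}\underset{n_1^2n_2\le x}{\sum\sum}\frac{|\lambda(n_2,n_1)|^2}{(n_1^2n_2)^{1+\epsilon}}
\le x^{1+\epsilon}\,D(1+\epsilon)=:C_{\pi,\epsilon}\,x^{1+\epsilon},
\end{equation*}
with $C_{\pi,\epsilon}=D(1+\epsilon)<\infty$, which is the assertion of the lemma. (Slightly more is true: the simple pole of $D$ at $s=1$ gives $D(\sigma)\ll_\pi(\sigma-1)^{-1}$ as $\sigma\to1^+$, and taking $\sigma=1+1/\log x$ in the display upgrades the bound to $\underset{n_1^2n_2\le x}{\sum\sum}|\lambda(n_2,n_1)|^2\ll_\pi x\log x$; a Tauberian theorem for Dirichlet series with nonnegative coefficients even yields an asymptotic of the form $c_\pi x+O_\pi(x^{1-\delta})$. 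Only the stated $x^{1+\epsilon}$ bound is needed in the paper.)

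The one ingredient that is not elementary — and the step I expect to require the most care if it were spelled out in detail — is the Rankin--Selberg input: the meromorphic continuation of $L(s,\pi\times\tilde{\pi})$ with a simple pole at $s=1$ and no poles in $Re(s)>1$, together with the explicit identification of $D(s)$ with $L(s,\pi\times\tilde{\pi})$ up to the factor $R(s)$. Both facts are classical for $\mathrm{GL}(3)$ (the continuation and pole structure from the theory of Eisenstein series / the work of Jacquet--Piatetski-Shapiro--Shalika, the Euler-factor identity from the $\mathrm{GL}(3)$ Hecke relations), so one simply invokes them; everything afterwards is the one-line positivity argument above.
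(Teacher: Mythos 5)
Your argument is correct and is precisely the standard Rankin--Selberg/Landau positivity argument that the paper invokes without proof (it states only that the lemma ``follows from the Rankin--Selberg theory''). The identification $D(s)=L(s,\pi\times\tilde{\pi})\zeta(3s)^{-1}$, the holomorphy of the cuspidal Rankin--Selberg $L$-function on $Re(s)>1$, and the term-by-term positivity step are all as intended, so there is nothing to add.
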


\section{Stationary phase analysis}

We need to use stationary phase analysis for oscillatory integrals. Let $\mathfrak{I}$ be an integral of the form
\begin{equation}\label{eintegral}
\mathfrak{I} = \int_a^b g(x)e(f(x))dx,
\end{equation}
where $f$ and $g$ are  real valued smooth functions on  $\BR$. The fundamental estimate for integrals of the form \eqref{eintegral} is the $r^{th}$-derivative test 
\begin{equation}\label{rth der test}
\frI\ll \bigg(\underset{[a,b]}{Var}\, g(x)\bigg)\bigg/\bigg(\min_{[a,b]}|f^{(r)}(x)|^{1/r}\bigg).
\end{equation}
We will however need sharper estimates and will use the stationary phase analysis as given by Huxley \cite{HUX} to analyze $\frI$. Moreover, in the case when the stationary point lies far enough from the interval $[a,b]$, we use Lemma 8.1 of Blomer, Khan and Young \cite{bky} on stationary phase analysis to show that $\frI$ is arbitrarily small. For completeness, we state the results here.

The following estimate are in terms of the parameters $\Theta_f, \Omega_f$ and $\Omega_g$ for which the derivatives satisfy
\begin{equation}\label{assertion 1}
f^{(i)}(x)\ll \frac{\Theta_f}{\Omega_f^i}, \qquad g^{(j)}(x)\ll \frac{1}{\Omega_g^j}.
\end{equation}

For the second assertion of the following lemma, we also require
\begin{equation}\label{assertion 2}
    f''(x) \gg \frac{\Theta_f}{\Omega_f^2}.
\end{equation}

\begin{Lemma}\label{huxley}
Suppose $f$ and $g$ are real valued smooth functions satisfying \eqref{assertion 1} for $i=2, 3$ and $j=0, 1, 2$. Let $\Omega_f\gg (b-a)$.
\begin{enumerate}
\item Suppose $f^\prime$ and $f^{\prime \prime}$ do not vanish   on the interval $[a, b]$. Let $\Lambda = \min_{ x\in [a, b]} |f^\prime (x)| $. Then we have
\[
\mathfrak{I} = \frac{g(b)e(f(b))}{2\pi if'(b)} - \frac{g(a)e(f(a))}{2\pi if'(a)} + O\bigg(\frac{\Theta_f}{ \Omega_f^2 \Lambda^3} \bigg( 1 +\frac{\Omega_f}{\Omega_g} +\frac{\Omega_f^2}{\Omega_g^2} \frac{\Lambda}{\Theta_f/ \Omega_f} \bigg)\bigg).
\]

\item Suppose that $f^\prime(x)$ changes sign from negative to positive at $x = x_0$ with $a<x_0 <b$. Let $\kappa= \min \{  b-x_0, x_0-a \}$. Further suppose that bound in equation \eqref{assertion 1} holds for $i=4$ and \eqref{assertion 2} holds. Then we have the following  asymptotic expansion
\[
\mathfrak{I} = \frac{g(x_0) e( f(x_0) + 1/8)}{\sqrt{f^{\prime \prime } (x_0)}} + \frac{g(b)e(f(b))}{2\pi if'(b)} - \frac{g(a)e(f(a))}{2\pi if'(a)} + O\bigg( \frac{\Omega_f^4}{ \Theta_f^2 \kappa^3} +  \frac{\Omega_f}{ \Theta_f^{3/2}  } +  \frac{\Omega_f^3}{ \Theta_f^{3/2} \Omega_g^2 }\bigg). 
\]
\end{enumerate}
\end{Lemma}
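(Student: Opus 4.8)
The plan is to treat the two parts separately: part (1) by a quantitative, twice–iterated integration by parts, and part (2) by localizing to the stationary point and comparing with a Fresnel integral. Both arguments are in the spirit of Huxley's treatment \cite{HUX}; in fact part (2) is essentially his stationary phase theorem with the endpoint contributions retained, so after translating and rescaling one could also simply quote \cite{HUX}. For part (1), since $f'$ is nonvanishing (and, $f''$ having constant sign, $f'$ is monotone) I would write $e(f(x)) = \frac{1}{2\pi i f'(x)}\frac{d}{dx}e(f(x))$ in \eqref{eintegral} and integrate by parts. The boundary terms are exactly $\frac{g(b)e(f(b))}{2\pi i f'(b)} - \frac{g(a)e(f(a))}{2\pi i f'(a)}$, and one is left with $-\int_a^b e(f)\,h_1$ where $h_1 = \bigl(\tfrac{g}{2\pi i f'}\bigr)'$. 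A second integration by parts (legitimate because $f''$, $f'''$ and $g$, $g'$, $g''$ are all controlled by \eqref{assertion 1}) turns this into one further boundary term $\bigl[\tfrac{h_1 e(f)}{2\pi i f'}\bigr]_a^b$ plus $-\int_a^b e(f)\,h_2$ with $h_2 = \bigl(\tfrac{h_1}{2\pi i f'}\bigr)'$. Estimating $h_1$, $h_1'$, $h_2$ termwise by \eqref{assertion 1} together with $|f'|\ge\Lambda$, bounding the surviving integral by $(b-a)\max|h_2|\le\Omega_f\max|h_2|$ (with the trivial estimate $\mathfrak I\ll\Omega_f$ covering the degenerate range where the phase is too flat for iterated integration by parts to help), and keeping only the dominant contributions yields the stated error $O\bigl(\tfrac{\Theta_f}{\Omega_f^2\Lambda^3}(1 + \Omega_f/\Omega_g + \tfrac{\Omega_f^3\Lambda}{\Omega_g^2\Theta_f})\bigr)$.

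For part (2), the sign change of $f'$ at $x_0$ forces $f''(x_0)>0$, and \eqref{assertion 2} gives the two–sided bound $|f'(x)| \asymp \tfrac{\Theta_f}{\Omega_f^2}|x-x_0|$. I would fix a cutoff $c$ (eventually of size $\asymp \Omega_f/\sqrt{\Theta_f}$, up to harmless $t^\epsilon$) and split $[a,b] = [a,x_0-c]\cup[x_0-c,x_0+c]\cup[x_0+c,b]$. On the two outer intervals $f'$ is nonvanishing with $\Lambda \asymp \tfrac{\Theta_f}{\Omega_f^2}c$, so part (1) applies and produces the endpoint terms at $a$ and $b$ together with an error, the spurious boundary contributions at $x_0\pm c$ from the two sides being matched against the interior analysis. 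On the central interval I would Taylor–expand $f(x) = f(x_0) + \tfrac12 f''(x_0)(x-x_0)^2 + \tfrac16 f'''(x_0)(x-x_0)^3 + O\bigl(\tfrac{\Theta_f}{\Omega_f^4}|x-x_0|^4\bigr)$ (the fourth–derivative hypothesis \eqref{assertion 1} for $i=4$ entering in the refined handling of the odd cubic term, as in \cite{HUX}) and $g(x) = g(x_0) + O\bigl(|x-x_0|/\Omega_g + |x-x_0|^2/\Omega_g^2\bigr)$, so that the central integral equals $g(x_0)e(f(x_0))\int_{|x-x_0|\le c} e\bigl(\tfrac12 f''(x_0)(x-x_0)^2\bigr)\,dx$ up to controlled errors. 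Completing this to $\int_{\BR} e\bigl(\tfrac12 f''(x_0)u^2\bigr)\,du = \frac{e(1/8)}{\sqrt{f''(x_0)}}$ (the tail $|u|>c$ estimated by the first–derivative test \eqref{rth der test}) recovers the main term $\frac{g(x_0)e(f(x_0)+1/8)}{\sqrt{f''(x_0)}}$. Collecting the Taylor remainders of $f$ and $g$ on the central interval, the Fresnel tail, and the output of part (1) on the outer intervals, and balancing them through the choice of $c$, yields the claimed error $O\bigl(\tfrac{\Omega_f^4}{\Theta_f^2\kappa^3} + \tfrac{\Omega_f}{\Theta_f^{3/2}} + \tfrac{\Omega_f^3}{\Theta_f^{3/2}\Omega_g^2}\bigr)$, the term $\tfrac{\Omega_f^4}{\Theta_f^2\kappa^3}$ being exactly what part (1) contributes near the closer endpoint.

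The routine part is the integration by parts itself. The delicate point — and the main obstacle — is the error bookkeeping in part (2): choosing the cutoff $c$ so that the cubic Taylor remainder of the phase, the Fresnel tail beyond $|u|=c$, and the non-stationary contribution supplied by part (1) all fit simultaneously under the stated bound, and verifying that the artificial boundary terms introduced at $x_0\pm c$ cancel up to an admissible error against the expansion near $x_0$. Since this is precisely the content of Huxley's stationary phase lemma, an acceptable alternative is to deduce part (2) directly from \cite{HUX} after normalizing $x_0$ to the origin and rescaling by $\Omega_f$.
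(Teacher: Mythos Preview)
Your sketch is correct and follows Huxley's argument, but note that the paper does not actually prove this lemma: it simply states that ``the above result is presented in Theorem~1 and Theorem~2 of \cite{HUX}.'' Your proposal reconstructs the proof behind that citation (iterated integration by parts for part~(1), and localization plus a Fresnel comparison for part~(2)), which is consistent with the paper's approach of deferring to \cite{HUX}; you even acknowledge that quoting \cite{HUX} directly is an acceptable alternative.
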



The above result is presented in Theorem 1 and Theorem 2 of \cite{HUX}. We shall also use the following lemma from \cite{bky} when the unique point $x_0$ lies away from $(a,b)$.

\begin{Lemma} \cite[Lemma 8.1]{bky} \label{bky no sp}
Let $\Theta_f\geq1$, $\Omega_f, \Omega_g, \Lambda>0$, and suppose that $f$ and $g$ are smooth real valued functions on the interval $[a, b]$, with $g$ supported on $[a,b]$. Let $f$ and $g$ satisfy \eqref{assertion 1} for $i\geq 2$ and $j\geq0$. Moreover, let $\Lambda = \min_{x\in[a,b]}|f'(x)|$.  Then $\frI$ satisfies
$$\frI\ll_A |b-a|[(\Omega_f\Lambda/\sqrt{\Theta_f})^{-A}+(\Lambda\Omega_g)^{-A}].$$
\end{Lemma}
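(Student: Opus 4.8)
The plan is to prove Lemma~\ref{bky no sp} by the classical non-stationary phase argument --- repeated integration by parts --- following the proof of \cite[Lemma 8.1]{bky}. First I would dispose of the degenerate regimes: \eqref{assertion 1} with $j=0$ gives $\|g\|_\infty\ll1$, so $|\frI|\le|b-a|\,\|g\|_\infty\ll|b-a|$ always holds, and hence the asserted bound is trivial whenever $\Omega_f\Lambda/\sqrt{\Theta_f}\le1$ or $\Lambda\Omega_g\le1$ (one of the two displayed factors is then $\ge1$). So I may assume $\Lambda\Omega_f\ge\sqrt{\Theta_f}\ (\ge1$, as $\Theta_f\ge1)$ and $\Lambda\Omega_g\ge1$, which is all that will be used.

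Since $|f'|\ge\Lambda>0$ and $f'$ is continuous, $e(f)=(2\pi i f')^{-1}(e(f))'$, so integration by parts with the operator $Dh:=(2\pi i)^{-1}(h/f')'$ --- the boundary terms vanish because $g$, and inductively every $D^{k}g$, is smooth and supported in $[a,b]$, hence vanishes at $a$ and $b$ together with all its derivatives --- gives after $A$ iterations
\[
\frI=\int_a^b (D^{A}g)(x)\,e(f(x))\,dx,\qquad\text{so}\qquad |\frI|\le|b-a|\,\|D^{A}g\|_\infty .
\]
Thus everything reduces to a pointwise bound for $D^{A}g$.

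For that, an induction on $A$ (just the product and quotient rules applied to $D$) shows that $D^{A}g$ is a linear combination, with coefficients depending only on $A$ and with $\ll_A1$ summands, of functions
\[
\frac{g^{(j_0)}\,f^{(j_1+1)}\cdots f^{(j_\ell+1)}}{(f')^{A+\ell}},\qquad \ell\ge0,\quad j_i\ge1\ (1\le i\le\ell),\quad j_0\ge0,\quad j_0+j_1+\cdots+j_\ell=A
\]
(so in particular $\ell\le A-j_0$). Estimating such a term by \eqref{assertion 1} --- using $|f^{(j_i+1)}|\ll\Theta_f\Omega_f^{-(j_i+1)}$, valid since $j_i+1\ge2$, together with $|g^{(j_0)}|\ll\Omega_g^{-j_0}$ and $|f'|\ge\Lambda$ --- gives a bound $\ll(\Lambda\Omega_g)^{-j_0}(\Lambda\Omega_f)^{-(A-j_0)}(\Theta_f/\Lambda\Omega_f)^{\ell}$. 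The key observation is that, under $\Lambda\Omega_f\ge\sqrt{\Theta_f}$ and $\ell\le A-j_0$, one has $(\Theta_f/\Lambda\Omega_f)^{\ell}\le\Theta_f^{\ell/2}\le\Theta_f^{(A-j_0)/2}$, so the bound becomes $\ll(\Lambda\Omega_g)^{-j_0}(\sqrt{\Theta_f}/\Lambda\Omega_f)^{A-j_0}$; this is exactly where $\sqrt{\Theta_f}$ rather than $\Theta_f$ enters. Summing over $0\le j_0\le A$ yields
\[
\|D^{A}g\|_\infty\ll_A\Big(\tfrac{1}{\Lambda\Omega_g}+\tfrac{\sqrt{\Theta_f}}{\Lambda\Omega_f}\Big)^{A}\ll_A(\Lambda\Omega_g)^{-A}+\big(\Omega_f\Lambda/\sqrt{\Theta_f}\big)^{-A},
\]
and combining this with $|\frI|\le|b-a|\,\|D^{A}g\|_\infty$ gives the lemma. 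The only mildly delicate point --- the step I would check most carefully --- is this last piece of bookkeeping: verifying that the secondary terms produced when $d/dx$ hits a power of $1/f'$ stay subordinate, so that the gain of $\sqrt{\Theta_f}$ (and not merely $\Theta_f$) survives; the remaining estimates are entirely routine.
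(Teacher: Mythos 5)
Your proof is correct, and it is essentially the standard argument for this statement: the paper itself gives no proof (it simply cites Blomer--Khan--Young, Lemma 8.1), and their proof is exactly this repeated integration by parts with the bookkeeping you carry out, including the key point that $\Lambda\Omega_f\ge\sqrt{\Theta_f}$ in the non-trivial regime is what lets each factor $\Theta_f/(\Lambda\Omega_f)$ arising from derivatives of $1/f'$ be absorbed while retaining the $\sqrt{\Theta_f}$ (rather than $\Theta_f$) in the final bound. The structure of $D^Ag$, the term-by-term estimate via \eqref{assertion 1}, and the summation over $j_0$ all check out.
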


We shall also use the following estimates on exponential integrals in two variables. Let $f(x, y)$ and $g(x, y)$  be two real valued smooth functions on the rectangle $[a, b] \times [c,d]$.  We consider the exponential integral in two variables given by
\begin{equation*}
\int_a^b \int_c^d g(x, y) e(f(x, y)) dx  \ dy.
\end{equation*}
Suppose there exist parameters $p_1, p_2>0$ such that

\begin{align} \label{conditionf}
\frac{\partial^2 f}{\partial^2 x}\gg p_1^2, \hspace{1cm} \frac{\partial^2 f}{\partial^2 y}\gg p_2^2,\hspace{1cm}    \frac{\partial^2 f(x, y)}{\partial^2 x} \frac{\partial^2 f}{\partial^2 y} -  \left[\frac{\partial^2 f}{\partial x \partial y} \right]^2 \gg p_1^2 p_2^2,  
\end{align}  for all $x, y \in [a, b] \times [c,d]$. Then we have (See \cite[Lemma 4]{BR2})

\[
 \int_a^b \int_c^d  e(f(x, y)) dx dy \ll \frac{1}{p_1 p_2}. 
\] Further suppose that $ \textrm{Supp}(g) \subset (a,b) \times (c,d)$. The total variation of $g$ equals

\begin{equation*}
\textrm{var}(g) = \int_a^b \int_c^d  \left|  \frac{\partial^2 g(x, y)}{\partial x \partial y} \right| dx dy.
\end{equation*}
 We have the following result (see \cite[Lemma 5]{BR2}).
 \begin{Lemma} \label{double expo sum}
 Let $f$ and  $g$ be as above. Let $f$ satisfies the conditions given in  equation \eqref{conditionf}. Then we have
 \[
 \int_a^b \int_c^d g(x, y) e(f(x, y)) dx dy \ll \frac{\textrm{var}(g)}{p_1 p_2},
 \] with an absolute implied constant. 
 \end{Lemma}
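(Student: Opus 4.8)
The plan is to deduce the weighted bound from the unweighted one (the two-variable second derivative test $\int_a^b\int_c^d e(f(x,y))\,dx\,dy\ll 1/(p_1p_2)$ quoted above from \cite[Lemma 4]{BR2}) by a soft two-dimensional Abel-summation manoeuvre. The only genuine analytic input — the non-degenerate stationary phase estimate for trivial amplitude, which rests on the Hessian condition in \eqref{conditionf} — is already available, so what remains is essentially bookkeeping.

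First I would exploit that $g$ is supported in the open rectangle $(a,b)\times(c,d)$, so that $g(a,\cdot)\equiv 0$ and $g(\cdot,c)\equiv 0$ together with $\partial_u g(u,c)\equiv 0$; hence one may write, for every $(x,y)\in[a,b]\times[c,d]$,
\[
g(x,y)=\int_a^x\int_c^y \frac{\partial^2 g}{\partial u\,\partial v}(u,v)\,dv\,du .
\]
Substituting this into the integral and interchanging the order of integration (legitimate since $g$, $\partial_u\partial_v g$ and $e(f)$ are bounded with compact support), one gets
\[
\int_a^b\int_c^d g(x,y)e(f(x,y))\,dx\,dy=\int_a^b\int_c^d \frac{\partial^2 g}{\partial u\,\partial v}(u,v)\left(\int_u^b\int_v^d e(f(x,y))\,dx\,dy\right)dv\,du .
\]

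Next I would observe that the hypotheses \eqref{conditionf} are assumed to hold on all of $[a,b]\times[c,d]$, hence a fortiori on every sub-rectangle $[u,b]\times[v,d]$ with $a\le u\le b$, $c\le v\le d$; moreover the implied constants there are the same as for the full rectangle and in particular do not depend on the corner $(u,v)$. Applying the cited trivial-amplitude estimate to each inner double integral gives
\[
\int_u^b\int_v^d e(f(x,y))\,dx\,dy\ll \frac{1}{p_1p_2}
\]
uniformly in $u,v$. Taking absolute values and pulling this uniform bound outside yields
\[
\left|\int_a^b\int_c^d g(x,y)e(f(x,y))\,dx\,dy\right|\ll \frac{1}{p_1p_2}\int_a^b\int_c^d\left|\frac{\partial^2 g}{\partial u\,\partial v}(u,v)\right|dv\,du=\frac{\textrm{var}(g)}{p_1p_2},
\]
which is the claim.

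The step I expect to require the most care — though it is not deep here — is checking that the quoted bound for the unweighted integral is genuinely \emph{uniform} over the family of sub-rectangles anchored at the upper-right corner $(b,d)$; this is immediate because \eqref{conditionf} is a pointwise lower bound valid on the whole rectangle, so it survives restriction, and the constant in \cite[Lemma 4]{BR2} depends only on those lower bounds, not on the endpoints of integration. Everything else (the representation of $g$ as an iterated integral of its mixed derivative, and the Fubini interchange) is justified by the compact support and smoothness of $g$. If one preferred not to invoke \cite[Lemma 4]{BR2} as a black box, the same argument goes through with the unweighted bound replaced by any estimate of the shape $\int_u^b\int_v^d e(f)\ll B$ with $B$ uniform in the corner, at the cost of $B$ in place of $1/(p_1p_2)$.
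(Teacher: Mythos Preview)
Your argument is correct: the representation of $g$ as the iterated integral of its mixed partial (using the compact support in the open rectangle), the Fubini interchange, and the uniform application of the unweighted bound on sub-rectangles all go through exactly as you say. The paper itself does not supply a proof of this lemma; it simply quotes the result from \cite[Lemma 5]{BR2}. The partial-summation reduction you carry out is precisely the classical device behind that reference, so there is no substantive difference to report.
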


\subsection{A Fourier-Mellin transform} Let $U$ be a smooth real valued function supported on the interval $[a, b] \subset (0, \infty)$ and satisfying $U^{(j)}\ll_{a, b, j} 1$. Let $r\in \mathbb{R}$ and $s= \sigma + i \beta \in \mathbb{C}$. We consider the following integral transform
\begin{equation*}
U^\dagger(r, s) := \int_0^{\infty} U(x) e(-rx) x^{s-1} dx. 
\end{equation*} 
We are interested in the behaviour of this integral in terms of the parameters $\beta$ and $r$. The integral  $U^\dagger (r, s) $ is of the form given in equation \eqref{eintegral} with functions
\[
g(x) = U(x) x^{\sigma-1} \ \ \ \  \textrm{and} \ \ \ \ f(x) =\frac{1}{2 \pi} \beta \log x - rx. 
\] 
We shall use the following lemma. 
 

\begin{Lemma}\cite[Lemma 5]{Mun4} \label{Fourier Mellin} 
Let $U$ be a smooth real valued function with $\textrm{supp} (U) \subset[a, b] \subset (0, \infty)$ that satisfies $U^{(j)}(x)\ll_{a, b, j} 1$. Let  $r\in \mathbb{R}$ and $s= \sigma + i \beta \in \mathbb{C}$. We have

\begin{equation*}
\begin{split}
U^{\dagger}(r,s)=\frac{\sqrt{2\pi}e(1/8)}{\sqrt{-\beta}}\left(\frac{\beta}{2\pi er}\right)^{i\beta} U_0\bigg(\sigma, \frac{\beta}{2\pi r}\bigg) + O_{a,b,\sigma}\left(\min\{|\beta|^{-3/2},|r|^{-3/2}\}\right),
\end{split}
\end{equation*}
where $U_0(\sigma,x) := x^\sigma U(x)$.
Moreover, we have the bound

\begin{equation}\label{dagger repeated ibp}
U^\dagger (r, s) = O_{a, b, \sigma, j} \left(\min \left\lbrace \left(\frac{1+|\beta|}{|r|} \right)^j , \left(\frac{1+|r|}{|\beta|} \right)^j \right\rbrace \right). 
\end{equation}
\end{Lemma}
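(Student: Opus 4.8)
The plan is to analyze $U^\dagger(r,s) = \int_0^\infty U(x) e(-rx) x^{s-1}\,dx$ by the stationary phase method, treating it as $\int g(x) e(f(x))\,dx$ with $g(x) = U(x)x^{\sigma-1}$ and $f(x) = \frac{\beta}{2\pi}\log x - rx$. First I would compute the stationary point: $f'(x) = \frac{\beta}{2\pi x} - r = 0$ gives $x_0 = \beta/(2\pi r)$, and $f''(x) = -\beta/(2\pi x^2)$, so $f''(x_0) = -(2\pi r^2)/\beta$. The key dichotomy is whether $x_0$ lies in the support $[a,b]$ of $U$: if $\beta/(2\pi r) \asymp 1$ (equivalently $|\beta| \asymp |r|$, both large), the stationary point is genuinely in play; otherwise one is in the non-stationary regime.

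The main argument splits into cases. When $|\beta|$ and $|r|$ are comparable and large, and $x_0 \in \mathrm{supp}(U)$, I would apply the second part of Lemma~\ref{huxley} (Huxley's asymptotic expansion) with the dictionary $\Theta_f \asymp |\beta| \asymp |r|$, $\Omega_f \asymp 1$, $\Omega_g \asymp 1$. The boundary terms vanish since $U$ is compactly supported in the open interval, so the leading term is $\frac{g(x_0)e(f(x_0) + 1/8)}{\sqrt{f''(x_0)}}$; plugging in $x_0 = \beta/(2\pi r)$, $g(x_0) = x_0^{\sigma-1}U(x_0)$, $\sqrt{f''(x_0)} = \sqrt{2\pi r^2/(-\beta)} = \sqrt{2\pi}\,|r|/\sqrt{-\beta}$ (with appropriate sign/argument bookkeeping, assuming $\beta < 0$ so that $-\beta > 0$), and using $e(f(x_0)) = x_0^{i\beta}e(-rx_0) = (\beta/(2\pi r))^{i\beta} e(-\beta/(2\pi))$, one assembles exactly $\frac{\sqrt{2\pi}\,e(1/8)}{\sqrt{-\beta}}\left(\frac{\beta}{2\pi e r}\right)^{i\beta} U_0(\sigma, \beta/(2\pi r))$ after absorbing $e(-\beta/(2\pi)) = e^{-i\beta}$-type factors into the $(\cdot)^{i\beta}$ via $\beta^{i\beta} e^{-i\beta} = (\beta/e)^{i\beta}$. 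The error term from Huxley is $O(\Omega_f^4/(\Theta_f^2\kappa^3) + \Omega_f/\Theta_f^{3/2} + \Omega_f^3/(\Theta_f^{3/2}\Omega_g^2)) = O(|\beta|^{-3/2})$ since $\kappa \asymp 1$, which by the symmetry $|\beta| \asymp |r|$ is also $O(\min\{|\beta|^{-3/2}, |r|^{-3/2}\})$. When $x_0$ lies outside a fixed neighborhood of $[a,b]$ — which happens precisely when $|\beta|$ and $|r|$ are not comparable, including when one of them is small — I would invoke Lemma~\ref{bky no sp} with $\Lambda = \min_{[a,b]}|f'(x)| \asymp \max\{|\beta|, |r|\}$, giving an arbitrary power saving; comparing with the claimed main term (which then has size $\asymp |\beta|^{-1/2}$ but is multiplied by nothing forcing it to be large) one checks the whole expression is $O(\min\{|\beta|^{-3/2}, |r|^{-3/2}\})$, absorbing the would-be main term into the error since it is itself negligible in this range. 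For the final bound \eqref{dagger repeated ibp}, I would simply integrate by parts $j$ times: each integration against $e(-rx)$ costs a factor $(1+|r|)^{-1}$ up to lower order from differentiating $x^{s-1}$ which contributes $|\beta|$'s, and dually integrating by parts using the $x^{i\beta}$ oscillation gives the other bound; more cleanly, repeated integration by parts in the $e(-rx)$ direction yields $O((1+|\beta|)^j/|r|^j)$ and by a change of variables $x \mapsto x$ exploiting $x^{i\beta} = e(\frac{\beta}{2\pi}\log x)$ one gets the symmetric bound.

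The main obstacle I anticipate is the bookkeeping of phases, signs, and arguments of square roots — in particular tracking the sign of $\beta$ (the statement has $\sqrt{-\beta}$, so the intended regime is $\beta < 0$, matching the applications where $s = 1 - it$ or $1/2 + i\tau$ with the relevant sign), getting the $e(1/8)$ versus $e(-1/8)$ right from the $\sqrt{f''}$ with $f'' < 0$, and correctly combining $\beta^{i\beta}$, $r^{i\beta}$, $e(-rx_0)$ into the single clean factor $(\beta/(2\pi e r))^{i\beta}$. A secondary technical point is verifying that Huxley's hypotheses \eqref{assertion 1} and \eqref{assertion 2} genuinely hold with the claimed parameters uniformly — in particular that $f^{(i)}(x) \ll \Theta_f/\Omega_f^i$ for $i = 2,3,4$ with $\Omega_f \asymp 1$ on the support, which is immediate since $f^{(i)}(x) = c_i\beta/x^i$ and $x \asymp 1$ there, and that $f''(x) \gg \Theta_f/\Omega_f^2$ likewise. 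One also needs to handle the transition zone where $x_0$ is near the boundary of $[a,b]$; here I would note that since $U$ is supported in $[a,b]$ with $a > 0$, one can always choose the cutoff so that either $x_0$ is comfortably inside or comfortably outside, or alternatively use part (1) of Lemma~\ref{huxley} on the non-stationary piece after a smooth partition. Since the paper attributes this to \cite[Lemma 5]{Mun4}, presumably only a sketch is expected, and I would keep the phase computation as the core and relegate the derivative verifications to a sentence.
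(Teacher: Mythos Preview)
Your proposal is correct and follows exactly the setup the paper itself lays out: immediately before stating the lemma, the paper writes $U^\dagger(r,s)$ in the form $\int g(x)e(f(x))\,dx$ with $g(x)=U(x)x^{\sigma-1}$ and $f(x)=\frac{\beta}{2\pi}\log x - rx$, then cites \cite[Lemma~5]{Mun4} without further argument. Your sketch fills in precisely the stationary phase computation this setup points toward, via Lemma~\ref{huxley} and Lemma~\ref{bky no sp}, so there is nothing to compare --- the paper offers no independent proof.
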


\section{Application of circle method and dual summation formulas}
In this paper, we present calculation for Hecke-Maass cusp forms, parallel to Munshi \cite{Mun4}. Let $\pi$ be a Hecke-Maass cusp form for $\rm GL(3)$. We detect $r=n$ by using the circle method in Lemma \ref{circle method} to write $S(N)=S^+(N)+S^-(N)$, where

\begin{equation*}
\begin{split}
S^\pm(N) = &\int_0^1\underset{1\leq q\leq Q<a\leq q+Q}{\sum\sumx}\frac{1}{aq} \sum_{r\geq1}r^{-it}e\bigg(\frac{\pm r\overline{a}}{q}\bigg)e\bigg(\frac{\mp rx}{aq}\bigg)U\bigg(\frac{r}{N}\bigg) \sum_{n\geq1}\lambda(1,n) e\bigg(\frac{\mp n\overline{a}}{q}\bigg)e\bigg(\frac{\pm nx}{aq}\bigg)V\bigg(\frac{n}{N}\bigg)dx.
\end{split}
\end{equation*}

We analyze $S^+(N)$ and observe that the same bounds follow for $S^-(N)$. We start with an application of dual summation formulas to the $n$-sum and the $r$-sum.

\subsection{\texorpdfstring{Application of Poisson summation to the $r$-sum}{Application of Poisson summation to the r-sum}} The $r$-sum in above is
\begin{equation*}
\sum_{r\geq1}r^{-it}e\bigg(\frac{r\overline{a}}{q}\bigg) e\bigg(\frac{-rx}{aq}\bigg) U\bigg(\frac{r}{N}\bigg).
\end{equation*}
Breaking the $r$-sum modulo $q$ by changing variables $r\mapsto \beta+rq$, the above equals
\begin{equation*}
\sum_{r\in\BZ}\sum_{\beta\bmod q}(\beta+rq)^{-it}e\bigg(\frac{\beta\overline{a}}{q}\bigg) e\bigg(\frac{-(\beta+rq)x}{aq}\bigg) U\bigg(\frac{\beta+rq}{N}\bigg).
\end{equation*}
Applying Poisson summation to the $r$-sum and changing variables, the above sum transforms into
\begin{equation*}
N^{1-it}\sum_{r\in\BZ}\delta(\overline{a}\equiv -r\bmod q) \int_\BR u^{-it}e\bigg(\frac{-Nu(ra-x)}{aq}\bigg) U(u) du = N^{1-it}\sum_{\substack{r\in\BZ\\ \overline{a}\equiv -r\bmod q }} U^\dagger\bigg(\frac{N(ra-x)}{aq}, 1-it\bigg).
\end{equation*}
Repeated integration by parts to the $u$-integral gives arbitrary saving unless $|r|\ll qt^{1+\epsilon}/N$. The congruence condition determines $a\bmod q$. We further note than since $(a,q)=1$, the congruence forces $(r,q)=1$. Therefore $r=0$ occurs only for $q=1$, the contribution of which is negligible. For non-zero $r$ to appear, we need $q>N/t^{1-\epsilon}$. From now on, we assume $Q>N/t^{1-\epsilon}$.

\subsection{\texorpdfstring{Application of Voronoi formula to the $n$-sum}{Application of Voronoi formula to the n-sum}} The $n$-sum in $S^+(N)$ is
\begin{equation*}
\sum_{n\geq1}\lambda(1,n)e\bigg(\frac{nr}{q}\bigg)e\bigg(\frac{nx}{aq}\bigg)V\bigg(\frac{n}{N}\bigg).
\end{equation*}
Application of Voronoi summation formula as given in Lemma \ref{gl3voronoi} transforms the above sum into

\begin{equation*}
q\sum_\pm \sum_{n_1|q}\sum_{n_2=1}^\infty \frac{\lambda(n_2,n_1)}{n_1n_2} S(\overline{r},\pm n_2;q/n_1)\frI_\pm\bigg(\frac{n_1^2n_2}{q^3}, \frac{x}{aq}\bigg),
\end{equation*}
where

\begin{equation*}
\frI_\pm\bigg(\frac{n_1^2n_2}{q^3}, \frac{x}{aq}\bigg) = \frac{1}{2\pi i} \int_{(\sigma)} \bigg(\frac{n_1^2n_2N}{q^3}\bigg)^{-s}\gamma_\pm(s) V^\dagger(Nx/aq, -s)\, ds.
\end{equation*}
Let $s = \sigma + i\tau$. Using Stirling's approximation
$$ \gamma_\pm(s)\ll 1+|\tau|^{3\sigma+3/2} $$
for $\sigma\geq -1/2$. Moreover, the bound \eqref{dagger repeated ibp} of Lemma \ref{Fourier Mellin} implies
\begin{equation}\label{Vdagger}
V^\dagger(Nx/aq,-s)\ll_{\sigma, j} \min\bigg\lbrace 1, \bigg(\frac{1+|Nx/aq|}{|\tau|}\bigg)^j, \bigg(\frac{1+|\tau|}{|Nx/aq|}\bigg)^j \bigg\rbrace.
\end{equation}
Shifting the line of integration to $\sigma=M$ for large $M$ and taking $j=3M+3$, we get arbitrary saving for $n_1^2n_2\gg q^3(N/aq)^3t^\epsilon/N \sim N^2t^\epsilon/Q^3$. For the smaller values of $n_1^2n_2$, we move the contour to $\sigma=-1/2$ to write
\begin{equation*}
\begin{split}
\frI_\pm\bigg(\frac{n_1^2n_2}{q^3}, \frac{x}{aq}\bigg) = \frac{1}{2\pi} \bigg(\frac{n_1^2n_2N}{q^3}\bigg)^{1/2} \int_\BR \bigg(\frac{n_1^2n_2N}{q^3}\bigg)^{-i\tau} \gamma_\pm(-1/2+i\tau) V^\dagger(Nx/aq, 1/2-i\tau) d\tau.
\end{split}
\end{equation*}

Due to the bounds on $V^\dagger(Nx/aq, 1/2-i\tau)$, we get arbitrary saving for $|\tau|\gg Nt^\epsilon/aq$. Moreover, for $0\leq x<aqt^{\epsilon}/N$, we get arbitrary saving for $|\tau|\gg t^{2\epsilon}$ and for $aqt^{\epsilon}/N\leq x\leq1$, we get arbitrary saving for $|\tau|<1$. These observations will be used later to effectively bound certain error terms. We smoothen the $\tau$-integral by introducing a partition of unity like Munshi \cite{Mun4}. Let $\CJ$ be a collection of $O(\log t)$ many real numbers in the interval $[-Nt^\epsilon/aq, Nt^\epsilon/aq]$, containing $0$. For each $J\in\CJ$, we have a smooth function $W_J(x)$ satisfying $x^kW_J^{(k)}(x)\ll_k 1$ for $k\geq0$. Moreover, $W_0(x)$ is supported in $[-1,1]$ and satisfies the stronger bound $W_0^{(k)}(x)\ll_k1$. For each $J>0$ (resp. $J<0$), $W_J$ is supported in $[J,4J/3]$ (resp. $[4J/3,J]$). Finally, we require that
\begin{equation*}
\sum_{J\in\CJ} W_J(x) = 1 \, \text{ for } x\in [-Nt^\epsilon/aq, Nt^\epsilon/aq].
\end{equation*}
The precise definition of $W_J$ is not needed. We break the $q$-sum into dyadic segments $C\leq q<2C$ with $N/t^{1-\epsilon}\leq C\ll Q$ to write
\begin{equation*}
S^+(N) = N\sum_{\substack{N/t^{1-\epsilon}\leq C\ll Q\\ dyadic}} S(N,C) + O(t^{-2019})
\end{equation*}
where $S(N,C)$ is the following expression obtained after the above applications of dual summation formulas
\begin{equation*}
\begin{split}
S(N,C) = \frac{N^{1/2-it}}{2\pi}\sum_\pm\sum_{J\in\CJ}\sum_{n_1^2n_2\ll N^2t^\epsilon/Q^3} \frac{\lambda(n_2, n_1)}{n_2^{1/2}}\underset{\substack{C<q\leq 2C, (r,q)=1\\ 1\leq |r|\ll qt^{1+\epsilon}/N\\ n_1|q}}{\sum\sum} \frac{S(\overline{r},\pm n_2; q/n_1)}{aq^{3/2}}\frI_\pm(q,r,n_1^2n_2),
\end{split}
\end{equation*}

\begin{equation*}
\frI_\pm(q,r,n_1^2n_2) = \int_{|\tau|\ll Nt^{\epsilon}/QC} \bigg(\frac{n_1^2n_2N}{q^3}\bigg)^{-i\tau} \gamma_\pm(-1/2+i\tau) W_J(\tau) \frI^{\star\star}(q,r,\tau) d\tau,
\end{equation*}
and

\begin{equation*}
\frI^{\star\star}(q,r,\tau) = \int_0^1 V^\dagger(Nx/aq, 1/2-i\tau) U^\dagger\bigg(\frac{N(ra-x)}{aq}, 1-it\bigg) dx.
\end{equation*}

Next, we analyze the above integrals using stationary phase analysis.

\section{\texorpdfstring{Analysis of the integrals}{Analysis of the integrals}}

Using Lemma \ref{Fourier Mellin}, we get the asymptotic estimate
\begin{equation*}
U^\dagger\bigg(\frac{N(ra-x)}{aq}, 1-it\bigg) = \frac{\sqrt{2\pi}e(1/8)}{\sqrt{t}}\bigg(\frac{-taq}{2\pi eN(ra-x)}\bigg)^{-it} U_0\bigg(1, \frac{-taq}{2\pi N(ra-x)}\bigg) + O(t^{-3/2+\epsilon}).
\end{equation*}
We recall that we get arbitrary saving for $0<x<aqt^{\epsilon}/N$ and $|\tau|\gg t^{2\epsilon}$. Using the above asymptotic and the bound \eqref{Vdagger} for $V^\dagger(Nx/aq, 1/2-i\tau)$ in this range, we get
\begin{equation*}
\int_0^{aqt^{\epsilon}/N} V^\dagger(Nx/aq, 1/2-i\tau) U^\dagger\bigg(\frac{N(ra-x)}{aq}, 1-it\bigg) dx \ll_j \frac{QCt^{\epsilon}}{Nt^{1/2}}\bigg(\frac{t^{\epsilon}}{1+|\tau|}\bigg)^{j}.
\end{equation*}
For $aqt^{\epsilon}/N<x\leq1$ (so that $|\tau|>t^{\epsilon/2}$, otherwise we get arbitrary saving), we use Lemma \ref{Fourier Mellin} to write

\begin{equation*}
V^\dagger\bigg(\frac{Nx}{aq}, 1/2-i\tau\bigg) = \frac{\sqrt{2\pi}e(1/8)}{\sqrt{\tau}}\bigg(\frac{-\tau aq}{2\pi eNx}\bigg)^{-i\tau} V_0\bigg(\frac{1}{2}, \frac{-\tau aq}{2\pi Nx}\bigg) + O\bigg(\min\bigg\lbrace\bigg|\frac{Nx}{aq}\bigg|^{-3/2}, |\tau|^{-3/2}\bigg\rbrace\bigg).
\end{equation*}
Therefore for an absolute constant $c_1$,
\begin{equation}\label{after first sp}
\begin{split}
\frI^{\star\star}(q,r,\tau) = \frac{c_1}{\sqrt{t\tau}}\bigg(\frac{-taq}{2\pi eN}\bigg)^{-it}\bigg(\frac{-\tau aq}{2\pi eN}\bigg)^{-i\tau} &\int_{aqt^{\epsilon}/N}^1 (ra-x)^{it} x^{i\tau} U_0\bigg(1, \frac{-taq}{2\pi N(ra-x)}\bigg) V_0\bigg(\frac{1}{2}, \frac{-\tau aq}{2\pi Nx}\bigg) dx \\
&+ O_j\bigg(E^{\star\star} + t^{-3/2+\epsilon} +  \frac{QCt^{\epsilon}}{Nt^{1/2}}\bigg(\frac{t^{\epsilon}}{1+|\tau|}\bigg)^{j}\bigg),
\end{split}
\end{equation}
where 

\begin{equation*}
E^{\star\star} = \frac{1}{t^{1/2}}\int_{aqt^{\epsilon}/N}^1 \min\bigg\lbrace\bigg|\frac{Nx}{aq}\bigg|^{-3/2}, |\tau|^{-3/2}\bigg\rbrace dx,
\end{equation*}
and the main term and the error term $E^{\star\star}$ occur only for $|\tau|>1$. We observe that $E^{\star\star}$ can be bounded as
\begin{equation*}
\begin{split}
E^{\star\star} &= \frac{1}{t^{1/2}}\int_{aqt^{\epsilon}/N}^{\min\{1, |\tau|aq/N\}} |\tau|^{-3/2}dx + \frac{1}{t^{1/2}}\int_{|\tau|aq/N}^1 \bigg(\frac{Nx}{aq}\bigg)^{-3/2} \delta(|\tau|aq/N < 1) dx\\
&\ll \frac{1}{t^{1/2}|\tau|^{3/2}}\min\bigg\lbrace 1, \frac{|\tau|aq}{N}\bigg\rbrace + \frac{1}{t^{1/2}|\tau|^{1/2}}\frac{aq}{N}\delta(|\tau|aq/N<1) \ll \frac{1}{t^{1/2}|\tau|^{3/2}}\min\bigg\lbrace 1, \frac{|\tau|aq}{N}\bigg\rbrace.
\end{split}
\end{equation*}

\subsection{\texorpdfstring{Analysis of the $x$-integral}{Analysis of the x-integral}} To analyze the integral in \eqref{after first sp}, we set $\frI = \int_{aqt^\epsilon/N}^1e(f(x))g(x)dx$ with
\begin{equation*}
\begin{split}
&f(x) = \frac{t\log(ra-x)}{2\pi} + \frac{\tau\log x}{2\pi},\\
& g(x) = U_0\bigg(1, \frac{-taq}{2\pi N(ra-x)}\bigg) V_0\bigg(\frac{1}{2}, \frac{-\tau aq}{2\pi Nx}\bigg).
\end{split}
\end{equation*}
Due to the bounds $x^{j}U^{(j)}(x)\ll_j 1$ and $x^{j}V^{(j)}(x)\ll_j 1$, and the definitions of $U_0$ and $V_0$ as given in Lemma \ref{Fourier Mellin}, we have
\begin{equation*}
g^{(j)}(x)\ll_j 1.
\end{equation*}
Since $supp\, V^\dagger(1/2,x)\subset[1,2]$, the support of $g(x)$ lies inside $[-\tau aq/4\pi N, -\tau aq/2\pi N]$. This lies inside $[aqt^\epsilon/N,1]$ for $-\tau\in[4\pi t^\epsilon, 2\pi N/aq]$, while $g(x)=0$ for $x\in[aqt^\epsilon/N,1]$ and $-\tau\geq 4\pi N/aq$. For $-\tau<4\pi t^\epsilon$, we use the second derivative test \eqref{rth der test}. For $-\tau\in[2\pi N/aq, 4\pi N/aq]$, we need to be a little more careful in the analysis of integrals since the $r^{th}$ derivative test \eqref{rth der test} is not sufficient. 
For $j\geq1$,
\begin{equation*}
2\pi f^{(j)}(x) = \frac{-t(j-1)!}{(ra-x)^j} + \frac{\tau(-1)^{j-1}(j-1)!}{x^j}.
\end{equation*}
In the support of the integral, $x-ra \sim taq/2\pi N$ and $x\sim -\tau aq/2\pi N$, where $a\sim b$ means $k_1<a/b<k_2$ for some constants $k_1,k_2>0$. Since $Q>N/t^{1-\epsilon}$, we have
\begin{equation}\label{jth der}
f^{(j)}(x) \sim_j -\tau\bigg(\frac{N}{\tau aq}\bigg)^j \quad \text{ for } j\geq2
\end{equation}
where $a\sim_j b$ means that the constants $k_1, k_2$ depend on $j$. Therefore $|f''(x)|\sim -\tau(N/\tau QC)^2$. The point $x_0$ where $f'(x_0)=0$ is called a stationary point. We have
\begin{equation*}
x_0 = \frac{ra\tau}{\tau+t}.
\end{equation*}
We recall that due to \eqref{Vdagger}, we have $\frI = O(t^{-20200})$ for $x>aqt^{\epsilon}/N$ and $|\tau|< t^{\epsilon/2}$. For $-\tau\in[t^{\epsilon/2},4\pi t^{\epsilon}]$, we apply the second derivative bound \eqref{rth der test} and the estimate \eqref{jth der} for $j=2$ along with the observation that $Var_{x\in[aqt^\epsilon/N,1]}\, g(x)\ll t^\epsilon$ to bound
\begin{equation}\label{easy bound 1}
\frI \ll \frac{QCt^\epsilon}{N}.
\end{equation}
For larger $|\tau|$, we start by writing the Taylor expansion of $f'(x)$ around $x=x_0$,
\begin{equation}\label{Taylor}
f'(x) = (x-x_0)f''(x_0) + O\bigg((x-x_0)^2\tau\bigg(\frac{N}{\tau QC}\bigg)^3\bigg).
\end{equation}
The error term follows from the estimate \eqref{jth der} for $j\geq3$. 

We now analyze the case $-\tau\in[4\pi t^\epsilon, 2\pi N/aq]$, for which $g(aqt^\epsilon/N)=g(1)=0$. We can therefore change the limits of the integral $\frI$ to write
\begin{equation*}
\frI = \int_{-\tau aq/4\pi N}^{-\tau aq/2\pi N} g(x)e(f(x))dx.
\end{equation*}
In case $x_0$ lies inside the interval $[-\tau aq/4\pi N, -\tau aq/2\pi N]$, one can expand the interval of integration to $[-\tau aq/8\pi N, -\tau aq/\pi N]$ without changing $\frI$. Then $\kappa = \min\{x_0 +\tau aq/8\pi N, -\tau aq/\pi N-x_0 \}\gg |\tau|QC/N$. Applying the second statement of Lemma \ref{huxley} with 
\begin{equation*}
\Theta_f = |\tau|, \quad \Omega_f = |\tau|QC/N, \quad \kappa = |\tau|QC/N, \quad \Omega_g=1,
\end{equation*}
so that the hypothesis $\Omega_f\gg (b-a)$ of Lemma \ref{huxley} is satisfied, we obtain
\begin{equation}\label{easy bound 2}
\frI = \frac{g(x_0)e(f(x_0)+1/8)}{\sqrt{f''(x_0)}} + O\bigg(\frac{QC}{|\tau|^{1/2}N}\bigg).
\end{equation}
In case $x_0$ does not lie inside the interval $I=[-\tau aq/4\pi N, -\tau aq/2\pi N]$, \eqref{Taylor} implies 
\begin{equation*}
\Lambda = \min_{x\in I}|f'(x)| \sim \min|x-x_0|N^2/|\tau|Q^2C^2.
\end{equation*}
In the case $\min_{x\in I}|x-x_0|>t^\epsilon\sqrt{|\tau|}QC/N$, we use the above estimate for $\min_{x\in I}|f'(x)|$ and apply Lemma \ref{bky no sp} to obtain $\frI = O(t^{-20200})$. While in the case $\min_{x\in I}|x-x_0|<t^\epsilon\sqrt{|\tau|}QC/N$, we expand the interval $I$ to $[-\tau aq/8\pi N, -\tau aq/\pi N]$ without changing $\frI$, so that $x_0$ now lies in the expanded interval with $\kappa\gg |\tau|QC/N$. The analysis now is the same as in the previous case. Putting together the estimates \eqref{easy bound 1} and \eqref{easy bound 2}, we obtain that for $1\leq|\tau|\leq 2\pi N/aq$,
\begin{equation}\label{easy asymp}
\frI = \frac{g(x_0)e(f(x_0)+1/8)}{\sqrt{f''(x_0)}} + O\bigg(\frac{QC}{|\tau|^{1/2}N}\bigg).
\end{equation}

Next we analyze the case $-\tau\in[2\pi N/aq, 4\pi N/aq]$. We start by observing that in this case, $g(aqt^{\epsilon}/N)=0$ but $g(1)\neq0$. We will therefore have to divide our analysis depending on the size of $f'(1)$. In view of \eqref{Taylor}, this translates to the size of $|x-x_0|$. Let $\kappa:= x_0-1$ (so that $x_0$ is outside $[aqt^\epsilon/N,1]$ if $\kappa>0$). Since $x_0 = ra\tau/(t+\tau)$ and $ra\sim -taq/N$ (due to the support of $U_0(1,x)$ being in $x\in[1/2, 5/2]$), we have $x_0 \sim -aq\tau/N$. Therefore $|x_0 - aqt^\epsilon/N|\gg 1$.

We observe that $x_0 = ra\tau/(t+\tau)$ implies $\tau = x_0t/(ra-x_0)$. In particular, $x_0=1$ for $\tau_0 := t/(ra-1)$. Since $x_0=1+\kappa$ by definition, \begin{equation}\label{tau in kappa}
\tau = \frac{(1+\kappa)t}{ra-(1+\kappa)} = \frac{(1+\kappa)t}{ra-1} + O\bigg(\frac{(1+\kappa)t\kappa}{(ra-1)^2}\bigg) = \tau_0 + \tau_0\kappa\bigg(1 + O\bigg(\frac{N}{tQC}\bigg)\bigg).
\end{equation}
Let $\kappa_0:= t^\epsilon\sqrt{QC/N}$. If $|\kappa|<\kappa_0$, we apply the second derivative bound \eqref{rth der test} and the estimate \eqref{jth der} for $j=2$ along with the observation that $Var_{x\in[aqt^\epsilon/N,1]}\, g(x)\ll t^\epsilon$ to obtain 
\begin{equation*}
\frI \ll \frac{QC\sqrt{|\tau|}t^\epsilon}{N}. 
\end{equation*}
Since $|\tau|\sim N/QC$, $g(x)\ll 1$ and $f''(x)\sim N^2/Q^2C^2|\tau|$ in the support of $g(x)$, the above bound can be replaced by \begin{equation}\label{critical bound 1}
\frI \ll \frac{g(x_0)e(f(x_0)+1/8)}{\sqrt{f''(x_0)}} +  t^\epsilon\sqrt{\frac{QC}{N}}. 
\end{equation}
Next, if $\kappa>\kappa_0$ (so that $x_0>1+\kappa_0$ and therefore does not lie in $I=[aqt^\epsilon/N, 1]$), we have $\min_{x\in I}|f'(x)| = N\kappa/QC$ in view of \eqref{Taylor}. Applying the first statement of Lemma \ref{huxley} with
\begin{equation*}
\Lambda = \frac{N\kappa}{QC}, \qquad \Theta_f = \frac{N}{QC}, \qquad \Omega_f=\Omega_g = 1,
\end{equation*}
along with the observations $\kappa> \kappa_0=t^\epsilon\sqrt{QC/N}$ and $|f'(1)|\gg N\kappa/QC$, we obtain
\begin{equation}\label{critical bound 2}
\frI \ll \frac{QC}{N\kappa}.
\end{equation}
Finally, when $\kappa<-\kappa_0$ (so that $x_0$ lies inside $I=[aqt^\epsilon/N, 1]$), we apply the second statement of Lemma \ref{huxley} with
$\Theta_f = N/QC,\, \Omega_f=\Omega_g=1$, and use $|\kappa|>t^\epsilon\sqrt{QC/N}$ and $|f'(1)|\gg N\kappa/QC$ to obtain
\begin{equation}\label{critical bound 3}
\frI = \frac{g(x_0)e(f(x_0)+1/8)}{\sqrt{f''(x_0)}} + O\bigg(\frac{QC}{N|\kappa|}\bigg).
\end{equation}
Putting together \eqref{critical bound 1}, \eqref{critical bound 2} and \eqref{critical bound 3}, we obtain that for $-\tau\in[2\pi N/aq, 4\pi N/aq]$,
\begin{equation}\label{critical asymp}
\frI = \frac{g(x_0)e(f(x_0)+1/8)}{\sqrt{f''(x_0)}} + O\bigg(\min\bigg\{\frac{QC}{N|\kappa|}, \kappa_0\bigg\}\bigg).
\end{equation}
In view of the change of variable \eqref{tau in kappa}, the estimate $N/tQC = o(1)$ and $\tau_0\sim N/QC$, we can replace the above error term by $O(\min\{|\tau-\tau_0|\-, \kappa_0\})$.

Putting together the estimates \eqref{easy asymp} and \eqref{critical asymp}, we obtain that for $1\leq|\tau|\ll Nt^\epsilon/QC$,
\begin{equation*}
\frI = \frac{g(x_0)e(f(x_0)+1/8)}{\sqrt{f''(x_0)}} + O\bigg(\frac{QC}{|\tau|^{1/2}N} + \delta(|\tau|\sim N/QC) \min\{|\tau-\tau_0|\-, \kappa_0\} \bigg).
\end{equation*}

Explicit computations show that
\begin{equation*}
\begin{split}
2\pi f(x_0) &= t\log(t/\tau) + (t+\tau)\log\bigg(\frac{ra\tau}{t+\tau}\bigg)\\
2\pi f''(x_0) &= \frac{(t+\tau)^3}{t\tau r^2a^2}, \text{ and }\\
g(x_0) &= U_0\bigg(1, \frac{-q(t+\tau)}{2\pi Nr}\bigg)V_0\bigg(\frac{1}{2}, \frac{-q(t+\tau)}{2\pi Nr}\bigg)=V_0\bigg(\frac{3}{2}, \frac{-q(t+\tau)}{2\pi Nr}\bigg).
\end{split}
\end{equation*}
We recall that $U(x)V(x)=V(x)$, therefore $U_0(r_1,x)V_0(r_2,x)=V_0(r_1+r_2,x)$. The above calculations therefore show that the expression in \eqref{after first sp} is asymptotic to
\begin{equation*}
c_2\frac{ra}{(t+\tau)^{3/2}}\bigg(\frac{-(t+\tau)q}{2\pi eNr}\bigg)^{-i(t+\tau)}V_0\bigg(\frac{3}{2}, \frac{-q(t+\tau)}{2\pi Nr}\bigg),
\end{equation*}
for come constant $c_2$. We collect these calculations in the following lemma.

\begin{Lemma}\label{sp lemma}
Let $N/t^{1-\epsilon}<Q$. Then
\begin{equation*}
\frI^{\star\star}(q,r,\tau) = \frI_1(q,r,\tau) + \frI_2(q,r,\tau)
\end{equation*}
with,
\begin{equation*}
\frI_1(q,r,\tau) = c_2\frac{ra}{(t+\tau)^{3/2}}\bigg(\frac{-(t+\tau)q}{2\pi eNr}\bigg)^{-i(t+\tau)}V_0\bigg(\frac{3}{2}, \frac{-q(t+\tau)}{2\pi Nr}\bigg),
\end{equation*}
and $\frI_2(q,r,\tau) = O(B(C,\tau))$ where
\begin{align*}
B(C,\tau) = &\frac{QC}{Nt^{1/2}}\bigg(\frac{t^\epsilon}{1+|\tau|}\bigg)^{10} + t^{-3/2+\epsilon} + \bigg(E^{\star\star}+ \frac{QC}{t^{1/2}|\tau|N}\bigg)\delta(|\tau|>1) \\ &+ \delta(|\tau|\sim N/QC)\frac{1}{t^{1/2}|\tau|^{1/2}} \min\{|\tau-\tau_0|\-, \kappa_0\}.
\end{align*}
\end{Lemma}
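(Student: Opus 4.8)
The plan is to assemble the case-by-case stationary phase computations carried out above into a single clean statement, tracking the main term through every case and bounding all remaining errors by $B(C,\tau)$.

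First I would record the two Fourier--Mellin expansions: applying Lemma~\ref{Fourier Mellin} to $U^\dagger\big(N(ra-x)/aq,1-it\big)$ and to $V^\dagger\big(Nx/aq,1/2-i\tau\big)$ peels off the oscillatory factors together with the smooth amplitudes $U_0,V_0$, at the cost of errors $O(t^{-3/2+\epsilon})$ and $O(\min\{|Nx/aq|^{-3/2},|\tau|^{-3/2}\})$ respectively. Multiplying the two main terms and the two errors against one another yields exactly the decomposition \eqref{after first sp}, with $E^{\star\star}$ being the contribution of the $V^\dagger$-error paired with the $U^\dagger$-main term. On the range $0<x<aqt^\epsilon/N$ the decay bound \eqref{Vdagger} makes the contribution negligible after repeated integration by parts, producing the term $\frac{QC}{Nt^{1/2}}\big(t^\epsilon/(1+|\tau|)\big)^{10}$ of $B(C,\tau)$, and one is reduced to analyzing $\frI=\int_{aqt^\epsilon/N}^1 g(x)e(f(x))\,dx$ with the displayed $f,g$.

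The core step is the stationary phase treatment of $\frI$: compute $f^{(j)}(x)$, localize via the supports of $U_0,V_0$ to obtain \eqref{jth der}, identify the stationary point $x_0=ra\tau/(\tau+t)$, and Taylor-expand $f'$ about $x_0$ as in \eqref{Taylor}. Then split according to the size of $|\tau|$: for $|\tau|<t^{\epsilon/2}$ the integral is negligible by \eqref{Vdagger}; for $t^{\epsilon/2}\le-\tau\le4\pi t^\epsilon$ the second derivative test \eqref{rth der test}, together with the bound $\ll t^\epsilon$ for the total variation of $g$, gives \eqref{easy bound 1}; for $4\pi t^\epsilon\le-\tau\le2\pi N/aq$ the amplitude $g$ vanishes at both endpoints, so one invokes either the second statement of Lemma~\ref{huxley} (when $x_0$ is, or after a harmless enlargement of the interval can be made to be, interior) or Lemma~\ref{bky no sp} (when $x_0$ is far from the interval), obtaining \eqref{easy asymp}; and for $-\tau>4\pi N/aq$ the amplitude vanishes identically. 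Reinserting the main term $g(x_0)e(f(x_0)+1/8)/\sqrt{f''(x_0)}$ into \eqref{after first sp}, computing $2\pi f(x_0)$, $2\pi f''(x_0)$ and $g(x_0)$ explicitly, and simplifying via the identity $U_0(r_1,x)V_0(r_2,x)=V_0(r_1+r_2,x)$ (coming from $U(x)V(x)=V(x)$) yields precisely $\frI_1(q,r,\tau)$.

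The main obstacle is the transitional range $-\tau\in[2\pi N/aq,4\pi N/aq]$, where $x_0$ sits essentially at the endpoint $x=1$ and the derivative test alone is too weak. Here the plan is to set $\kappa=x_0-1$, relate it to $\tau-\tau_0$ (with $\tau_0=t/(ra-1)$) through the expansion \eqref{tau in kappa}, and split once more: for $|\kappa|<\kappa_0:=t^\epsilon\sqrt{QC/N}$ use the second derivative test to get \eqref{critical bound 1}; for $\kappa>\kappa_0$ use the first statement of Lemma~\ref{huxley} with $\Lambda=N\kappa/QC$ to get \eqref{critical bound 2}; for $\kappa<-\kappa_0$ use the second statement of Lemma~\ref{huxley} to get \eqref{critical bound 3}. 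Combining into \eqref{critical asymp} and converting the $\kappa$-dependence back into $|\tau-\tau_0|^{-1}$ produces the last term of $B(C,\tau)$; collecting this together with $E^{\star\star}$, the term $t^{-3/2+\epsilon}$, and the error from $\frI$ scaled by the prefactor $1/\sqrt{t\tau}$ from \eqref{after first sp}, one arrives at the stated bound $\frI_2=O(B(C,\tau))$.
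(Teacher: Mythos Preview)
Your proposal is correct and follows essentially the same route as the paper: it is precisely the case-by-case stationary phase analysis of Section~6, assembling the Fourier--Mellin expansions \eqref{after first sp}, the range splitting in $\tau$, the asymptotics \eqref{easy asymp} and \eqref{critical asymp}, and the explicit evaluation of $f(x_0),f''(x_0),g(x_0)$ into the statement of the lemma. The only cosmetic difference is that you phrase the decomposition of \eqref{after first sp} as ``multiplying main terms and errors against one another,'' whereas the paper absorbs the cross terms directly into $E^{\star\star}$ and $t^{-3/2+\epsilon}$; the content is identical.
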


We observe that since $N/t^{1-\epsilon}<Q<N^{1/2}$,
\begin{equation}\label{intB(C,tau)}
\int_{|\tau|\ll Nt^\epsilon/QC} B(C,\tau)\ll t^\epsilon\sqrt{QC/Nt}.
\end{equation}

\begin{Remark}
We observe that $\frI_1(q,r,\tau)\ll QCt^\epsilon/Nt^{1/2}$. This is slightly better than the corresponding trivial bound of $1/t^{1/2}K$ obtained by Munshi in Lemma $8$ of \cite{Mun4}. This comparison is made by substituting $Q=(N/K)^{1/2}$. This will finally help us to improve upon Munshi's bound.

\end{Remark}

Using Lemma \ref{sp lemma}, we have the following decomposition of $S(N,C)$.

\begin{Lemma}\label{decomposition into S1 and S2 lemma}
\begin{equation*}
S(N,C) = \sum_{J\in\CJ} S_{1,J}(N,C) + S_{2,J}(N,C)
\end{equation*}
where
\begin{equation*}
S_{\ell, J}(N,C) = \frac{N^{1/2-it}}{2\pi} \sum_{\pm} \sum_{n_1^2n_2\ll N^2t^\epsilon/Q^3} \frac{\lambda(n_2,n_1)}{n_2^{1/2}} \underset{\substack{C<q\leq 2C, (r,q)=1\\ 1\leq |r|\ll qt^{1+\epsilon}/N\\ n_1|q}}{\sum\sum} \frac{S(\overline{r}, \pm n_2; q/n_1)}{aq^{3/2}}\frI_{\ell,J, \pm}(q,r,n_1^2n_2)
\end{equation*}
and
\begin{equation*}
\frI_{\ell, J, \pm}(q,r,n) = \int_\BR \bigg(\frac{nN}{q^3}\bigg)^{-i\tau} \gamma_\pm(-1/2+i\tau)\frI_\ell(q,r,\tau) W_J(\tau) d\tau,
\end{equation*}
where $\frI_\ell(q,r,\tau)$ is defined in Lemma \ref{sp lemma}.
\end{Lemma}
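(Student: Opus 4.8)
The plan is to obtain this decomposition by substituting the identity $\frI^{\star\star}(q,r,\tau) = \frI_1(q,r,\tau) + \frI_2(q,r,\tau)$ from Lemma \ref{sp lemma} directly into the formula for $S(N,C)$ and rearranging. Recall that $S(N,C)$ is a finite sum over $\pm$, over $J \in \CJ$, over the admissible $(n_1,n_2)$ with $n_1^2 n_2 \ll N^2 t^\epsilon/Q^3$, and over the pairs $(q,r)$ with $C < q \le 2C$, $n_1 \mid q$, $(r,q)=1$, $1 \le |r| \ll q t^{1+\epsilon}/N$, of the quantity $\frac{N^{1/2-it}}{2\pi}\,\frac{\lambda(n_2,n_1)}{n_2^{1/2}}\,\frac{S(\overline r, \pm n_2; q/n_1)}{aq^{3/2}}\,\frI_\pm(q,r,n_1^2 n_2)$, where $\frI_\pm(q,r,n_1^2 n_2)$ is the $\tau$-integral of $(n_1^2 n_2 N/q^3)^{-i\tau}\gamma_\pm(-1/2+i\tau)W_J(\tau)\frI^{\star\star}(q,r,\tau)$ over $|\tau| \ll Nt^\epsilon/QC$.

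First I would use linearity of the $\tau$-integral to split each $\frI_\pm(q,r,n_1^2 n_2)$ into the integral with $\frI_1(q,r,\tau)$ replacing $\frI^{\star\star}(q,r,\tau)$ plus the integral with $\frI_2(q,r,\tau)$ replacing $\frI^{\star\star}(q,r,\tau)$. Next, since each $W_J$ (for $J \in \CJ$) is supported in a bounded interval contained in $[-Nt^\epsilon/aq, Nt^\epsilon/aq]$, and $aq \asymp QC$ throughout the range $C < q \le 2C$, $a \sim Q$, the cutoff $|\tau| \ll Nt^\epsilon/QC$ is already enforced by $W_J(\tau)$; hence each of the two resulting integrals may be extended to all of $\tau \in \BR$ without changing its value. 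These two integrals are precisely $\frI_{1,J,\pm}(q,r,n_1^2 n_2)$ and $\frI_{2,J,\pm}(q,r,n_1^2 n_2)$ as defined in the statement, with the dependence on $J$ that was implicit in $\frI_\pm$ now made explicit.

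Finally I would collect terms: the contribution of the $\frI_{1,J,\pm}$ pieces, summed over the remaining indices, is exactly $S_{1,J}(N,C)$, and that of the $\frI_{2,J,\pm}$ pieces is exactly $S_{2,J}(N,C)$, while the outer sum over $J \in \CJ$ present in the original definition of $S(N,C)$ becomes the outer sum over $J$ in the lemma, giving $S(N,C) = \sum_{J\in\CJ}\big(S_{1,J}(N,C) + S_{2,J}(N,C)\big)$. All the sums involved are finite, and the $\tau$-integrals converge absolutely on the bounded support of $W_J$, where $\gamma_\pm(-1/2+i\tau)$ grows only polynomially, so no analytic issue arises in the manipulations. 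I do not expect any genuine obstacle here: the lemma is a purely formal rearrangement of $S(N,C)$, and the only point worth a sentence is the harmless extension of the $\tau$-range to $\BR$, which is justified by the compact support of $W_J$ together with $aq \asymp QC$.
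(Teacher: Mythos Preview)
Your proposal is correct and matches the paper's approach: the paper presents this lemma as an immediate consequence of Lemma~\ref{sp lemma} without giving a separate proof, and your argument simply spells out the underlying formal substitution and rearrangement. The only extra detail you supply is the justification for extending the $\tau$-integral to $\BR$ via the compact support of $W_J$, which the paper leaves implicit.
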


\section{Cauchy-Schwarz and Poisson summation- First Application}

In this section, we analyze 
\begin{equation*}
S_2(N,C):= \sum_{J\in\CJ} S_{2, J}(N,C).
\end{equation*}
Breaking the $n$-sum into dyadic segments of length $L$
\begin{equation*}
\begin{split}
S_{2}(N,C)\ll t^\epsilon N^{1/2}\int_{|\tau|<\frac{Nt^\epsilon}{QC}} \sum_\pm \sum_{\substack{1\leq L\ll N^2t^\epsilon/Q^3\\ dyadic}}& \sum_{n_1, n_2} \frac{|\lambda(n_2,n_1)|}{n_2^{1/2}} U\bigg(\frac{n_1^2n_2}{L}\bigg) \\
&\times\bigg|\underset{\substack{C<q\leq 2C, (r,q)=1\\ 1\leq |r|\ll qt^{1+\epsilon}/N\\ n_1|q}}{\sum\sum} \frac{S(\overline{r},\pm n_2; q/n_1)}{aq^{3/2-3i\tau}}\frI_2(q,r,\tau)\bigg|d\tau.
\end{split}
\end{equation*}

Next we apply Cauchy-Schwarz inequality with the $n_1, n_2$-sums outside and apply the Ramanujan bound on average (Lemma \ref{Ram bound}) to write

\begin{equation}\label{error after cauchy}
S_2(N,C)\ll t^\epsilon N^{1/2} \int_{|\tau|<\frac{Nt^\epsilon}{QC}} \sum_\pm \sum_{\substack{1\leq L\ll N^2t^\epsilon/Q^3\\ dyadic}} L^{1/2}[S_{2,\pm}(N,C,L,\tau)]^{1/2} d\tau,
\end{equation}
where
\begin{equation*}
S_{2,\pm}(N,C,L,\tau) = \sum_{n_1, n_2} \frac{1}{n_2}U\bigg(\frac{n_1^2n_2}{L}\bigg)\bigg|\underset{\substack{C<q\leq 2C, (r,q)=1\\ 1\leq |r|\ll qt^{1+\epsilon}/N\\ n_1|q}}{\sum\sum} \frac{S(\overline{r},\pm n_2; q/n_1)}{aq^{3/2-3i\tau}}\frI_2(q,r,\tau)\bigg|^2.
\end{equation*}

The analysis of $S_{2,-}(N,C,L,\tau)$ is similar to that of $S_{2,+}(N,C,L,\tau)$, so we consider only\\ $S_{2,+}(N,C,L,\tau)$. Opening the absolute value squared the expression of $S_{2,+}(N,C,L,\tau)$ is
\begin{equation}\label{S2NCLtau}
\sum_{n_1\leq2C} \underset{\substack{C<q_1\leq 2C, (r_1,q_1)=1\\ 1\leq |r_1|\ll q_1t^{1+\epsilon}/N\\ n_1|q_1}}{\sum\sum}\  \underset{\substack{C<q_2\leq 2C, (r_2,q_2)=1\\ 1\leq |r_2|\ll q_2t^{1+\epsilon}/N\\ n_1|q_2}}{\sum\sum}\  \frac{1}{a_1a_2q_1^{3/2-3i\tau}q_2^{3/2+3i\tau}}\frI_2(q_1,r_1,\tau)\overline{\frI_2(q_2,r_2,\tau)}\BT
\end{equation}
where we temporarily set $\BT$ to include the $n_2$-sum
\begin{equation*}
\BT= \sum_{n_2}\frac{1}{n_2}U\bigg(\frac{n_1^2n_2}{L}\bigg)S(\overline{r_1},n_2; q_1/n_1)S(\overline{r_2},n_2; q_2/n_1).
\end{equation*}

Let $\hat{q}_1=q_1/n_1$ and $\hat{q}_2=q_2/n_1$. Breaking the $n_2$-sum modulo $\hat{q}_1\hat{q}_2$ and applying Poisson summation formula as Munshi does, $\BT$ transforms into
\begin{equation*}
\BT= \frac{n_1^2}{q_1q_2}\sum_{n_2\in\BZ} \frC U^\dagger(n_2L/q_1q_2,0),
\end{equation*}
where $\frC$ is the sum
\begin{equation}\label{character sum}
\frC = \sum_{\beta\bmod \hat{q}_1\hat{q}_2} S(\overline{r_1},\beta; q_1/n_1)S(\overline{r_2},\beta; q_2/n_1) e(\beta n_2/\hat{q}_1\hat{q}_2).
\end{equation}
Bounds on $U^\dagger(n_2L/q_1q_2,0)$ give arbitrary saving for $|n_2|\gg C^2t^\epsilon/L$. Recalling that $a$ is of size $Q$, the expression of $S_{2,+}(N,C,L,\tau)$ in \eqref{S2NCLtau} is bounded by
\begin{equation*}
\frac{B(C,\tau)^2}{Q^2C^5}\sum_{n_1\leq2C} \underset{\substack{C<q_1\leq 2C, (r_1,q_1)=1\\ 1\leq |r_1|\ll q_1t^{1+\epsilon}/N\\ n_1|q_1}}{\sum\sum}\  \underset{\substack{C<q_2\leq 2C, (r_2,q_2)=1\\ 1\leq |r_2|\ll q_2t^{1+\epsilon}/N\\ n_1|q_2}}{\sum\sum}\ n_1^2 \sum_{|n_2|\ll C^2t^\epsilon/L} |\frC| + O(t^{-2019}).
\end{equation*}
We use Lemma $13$ of \cite{Mun4} to bound the character sum $\frC$. For completeness, we state it here.

\begin{Lemma}\label{character sum lemma}
We have
$$ \frC\ll \hat{q}_1\hat{q}_2 (\hat{q}_1,\hat{q}_2,n_2).$$
Moreover for $n_2=0$, we get that $\frC=0$ unless $\hat{q}_1=\hat{q}_2$, in which case
$$ \frC\ll \hat{q}_1^2(\hat{q}_1, r_1-r_2). $$
\end{Lemma}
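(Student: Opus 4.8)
The statement to prove is Lemma~\ref{character sum lemma}, the estimate for the character sum
$$\frC = \sum_{\beta\bmod \hat{q}_1\hat{q}_2} S(\overline{r_1},\beta; \hat q_1)S(\overline{r_2},\beta; \hat q_2) e(\beta n_2/\hat{q}_1\hat{q}_2),$$
where $\hat q_i = q_i/n_1$.

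My plan is to open the two Kloosterman sums, interchange the order of summation so that the $\beta$-sum can be evaluated first, and then handle the resulting exponential sum by the Chinese Remainder Theorem. Writing $S(\overline{r_i},\beta;\hat q_i) = \sum_{x_i \bmod \hat q_i}^\star e((\overline{r_i} x_i + \beta \overline{x_i})/\hat q_i)$, the variable $\beta$ appears only in the phases $e(\beta \overline{x_1}/\hat q_1)$, $e(\beta\overline{x_2}/\hat q_2)$ and $e(\beta n_2/\hat q_1 \hat q_2)$. Combining these over the common modulus $\hat q_1\hat q_2$, the $\beta$-sum becomes a complete additive character sum over $\BZ/\hat q_1\hat q_2\BZ$, which evaluates to $\hat q_1 \hat q_2$ times the indicator that
$$\hat q_2\,\overline{x_1}\,\overline{(\hat q_2)} + \hat q_1\,\overline{x_2}\,\overline{(\hat q_1)} + n_2 \equiv 0 \pmod{\hat q_1 \hat q_2},$$
after clearing denominators appropriately (here $\overline{(\hat q_2)}$ denotes the inverse of $\hat q_2$ modulo $\hat q_1$, etc.). This congruence ties $x_1$ and $x_2$ together; the number of pairs $(x_1,x_2)$ satisfying it, as $x_i$ ranges over units mod $\hat q_i$, is bounded by $(\hat q_1,\hat q_2,n_2)$ up to the coprimality constraints — this is where the gcd factor appears. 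Multiplying by the prefactor $\hat q_1\hat q_2$ and noting the remaining phases $e(\overline{r_i}x_i/\hat q_i)$ are bounded by $1$ gives the first bound $\frC \ll \hat q_1\hat q_2(\hat q_1,\hat q_2,n_2)$.

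For the case $n_2 = 0$: now the $\beta$-sum forces $\hat q_2\overline{x_1}\cdot(\text{unit}) + \hat q_1\overline{x_2}\cdot(\text{unit}) \equiv 0 \pmod{\hat q_1\hat q_2}$. Reducing modulo $\hat q_1$ gives $\hat q_1 \overline{x_2}\cdot(\text{unit}) \equiv 0$, which is automatic, while reducing appropriately isolates the compatibility of $\hat q_1$ and $\hat q_2$. A cleaner route is to first reduce to the case $(\hat q_1,\hat q_2)=1$ by a CRT/multiplicativity splitting of the Kloosterman sums across the gcd, and observe that when the moduli have a common prime power the $\beta$-sum over the shared part is an incomplete exponential sum that vanishes unless the two Kloosterman sums have matching modulus there; iterating forces $\hat q_1 = \hat q_2$. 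Once $\hat q_1 = \hat q_2 =: \hat q$, the $\beta$-sum over $\BZ/\hat q\BZ$ of $e(\beta(\overline{x_1}-\overline{x_2})/\hat q)$ forces $x_1 \equiv x_2 \pmod{\hat q}$, collapsing the double sum to $\hat q \sum_{x\bmod \hat q}^\star e((\overline{r_1}-\overline{r_2})x/\hat q) = \hat q\, S_{\text{Ramanujan-type}}$; this Ramanujan sum is bounded by $\hat q(\hat q, r_1 - r_2)$ trivially (or one can cite a Ramanujan sum bound), giving $\frC \ll \hat q^2(\hat q, r_1-r_2)$.

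The main obstacle I anticipate is the bookkeeping in the general case when $(\hat q_1, \hat q_2) > 1$: the Kloosterman sums do not split cleanly across a shared prime power $p^k$, and one must carefully track which inverse-class congruences survive the $\beta$-summation. The cleanest workaround is to write $\hat q_1 = d e_1$, $\hat q_2 = d e_2$ with $(e_1,e_2)=1$ — though $d$ and $e_1$ need not be coprime, so even this requires a prime-by-prime analysis — and to bound the surviving congruence conditions on $(x_1, x_2)$ by elementary counting, absorbing everything into the factor $(\hat q_1,\hat q_2,n_2)$. Since this is precisely Lemma~13 of Munshi~\cite{Mun4}, I would in practice invoke that reference for the detailed computation and only indicate the CRT structure above as the conceptual skeleton.
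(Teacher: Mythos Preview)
The paper does not prove this lemma at all; it simply cites Lemma~13 of Munshi~\cite{Mun4} and restates it. Your proposal ultimately does the same thing, so in that sense you match the paper exactly.

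Your added conceptual sketch is the right idea and is essentially how Munshi's argument goes, but a few details are off. First, when you combine the three exponentials over the common modulus $\hat q_1\hat q_2$, the congruence coming from the $\beta$-sum is simply
\[
\hat q_2\,\overline{x_1} + \hat q_1\,\overline{x_2} + n_2 \equiv 0 \pmod{\hat q_1\hat q_2};
\]
the extra factors $\overline{(\hat q_2)}$, $\overline{(\hat q_1)}$ you wrote do not belong (and indeed need not exist, since $\hat q_1,\hat q_2$ are not assumed coprime). Second, in the $n_2=0$ case with $\hat q_1=\hat q_2=\hat q$, the $\beta$-sum is over $\BZ/\hat q^2\BZ$, not $\BZ/\hat q\BZ$; after summing out the redundant $\hat q$ copies you get $\hat q\sum_{\beta_0\bmod\hat q}S(\overline{r_1},\beta_0;\hat q)S(\overline{r_2},\beta_0;\hat q)$, and the inner $\beta_0$-sum forces $x_2\equiv -x_1$ (not $x_1\equiv x_2$), leaving $\hat q^2$ times a Ramanujan sum in $\overline{r_1}-\overline{r_2}$. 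Since $(r_1r_2,\hat q)=1$, one has $(\hat q,\overline{r_1}-\overline{r_2})=(\hat q,r_1-r_2)$, recovering the stated bound. These are bookkeeping slips rather than gaps; your overall strategy is sound, and deferring the detailed counting to \cite{Mun4} is exactly what the paper does.
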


\subsection{Diagonal contribution} We first consider the contribution of $n_2=0$, which we denote by\\ $S_{2,+}^\flat(N,C,L,\tau)$. Using the second statement of Lemma \ref{character sum lemma}
\begin{equation*}
S_{2,+}^\flat(N,C,L,\tau)\ll \frac{B(C,\tau)^2}{Q^2C^5} \sum_{n_1\leq 2C} \bigg\lbrace \frac{C^5t}{n_1^2N} + \frac{C^5t^2}{n_1N^2}\bigg\rbrace,
\end{equation*}
where the first term is the contribution from terms with $r_1=r_2$ and the second term is the contribution from $r_1\neq r_2$. Since we will choose $N>t^{1+\epsilon}$, the first term dominates and we get
\begin{equation}\label{error diagonal}
S_{2,+}^\flat(N,C,L,\tau)\ll \frac{t^{1+\epsilon}}{NQ^2}B(C,\tau)^2.
\end{equation}

\subsection{Off-diagonal contribution} We now bound the contribution of terms with $n_2\neq0$, which we denote by $S_{2,+}^\sharp(N,C,L,\tau)$. Using the first statement of Lemma \ref{character sum lemma},
\begin{equation}\label{error off-diagonal}
\begin{split}
S_{2,+}^\sharp(N,C,L,\tau)&\ll \frac{B(C,\tau)^2}{Q^2C^3} \sum_{n_1\leq2C} \underset{\substack{C<q_1\leq 2C, (r_1,q_1)=1\\ 1\leq |r_1|\ll q_1t^{1+\epsilon}/N\\ n_1|q_1}}{\sum\sum}\  \underset{\substack{C<q_2\leq 2C, (r_2,q_2)=1\\ 1\leq |r_2|\ll q_2t^{1+\epsilon}/N\\ n_1|q_2}}{\sum\sum}\ n_1^2 \sum_{1\leq|n_2|\ll C^2t^\epsilon/L} (q_1,n_2)\\
&\ll \frac{t^{2+\epsilon}C^3}{Q^2N^2L}B(C,\tau)^2.
\end{split}
\end{equation}

\subsection{Estimating $S_2(N,C)$} Using the bounds \eqref{error diagonal} and \eqref{error off-diagonal} in \eqref{error after cauchy},
\begin{equation*}
\begin{split}
S_2(N,C)\ll t^\epsilon N^{1/2} \int_{|\tau|<\frac{Nt^\epsilon}{QC}} B(C,\tau) \sum_\pm \sum_{\substack{1\leq L\ll N^2t^\epsilon/Q^3\\ dyadic}} \bigg\lbrace \frac{L^{1/2}t^{1/2}}{N^{1/2}Q} + \frac{tC^{3/2}}{NQ}\bigg\rbrace d\tau.
\end{split}    
\end{equation*}
Using the estimate \eqref{intB(C,tau)} in above
\begin{equation*}
S_2(N,C)\ll t^\epsilon{Q^{1/2}C^{1/2}}\bigg\{ \frac{N^{1/2}}{Q^{5/2}} + \frac{t^{1/2}C^{3/2}}{NQ}\bigg\}.
\end{equation*}
Multiplying by N and summing over $C\ll Q$ dyadically, the contribution of $S_2(N,C)$ towards $S^+(N)$ is
\begin{equation}\label{S2}
t^\epsilon NQ\bigg\{ \frac{N^{1/2}}{Q^{5/2}} + \frac{t^{1/2}Q^{1/2}}{N}\bigg\}.
\end{equation}

\section{Cauchy-Schwarz and Poisson Summation- Second application}
We now analyze the sum $S_{1,J}(N,C)$. We need to get further cancellations in the $\tau$-integral to get a subconvexity bound. For notational simplicity, we consider only the positive $J$ with $J\gg t^\epsilon$. The same analysis holds for negative values of $J$ with $-J\gg t^\epsilon$. For $|J|\ll t^\epsilon$, the analysis is done as before since we do not need to get cancellation in the $\tau$-integral. We recall that

\begin{equation*}
S_{1,J}(N,C) = \frac{N^{1/2-it}}{2\pi} \sum_{\pm} \sum_{n_1^2n_2\ll N^2t^\epsilon/Q^3} \frac{\lambda(n_2,n_1)}{n_2^{1/2}} \underset{\substack{C<q\leq 2C, (r,q)=1\\ 1\leq |r|\ll qt^{1+\epsilon}/N\\ n_1|q}}{\sum\sum} \frac{S(\overline{r},\pm n_2; q/n_1)}{aq^{3/2}}\frI_{1,\pm,J}(q,r,n_1^2n_2)
\end{equation*}
where
\begin{equation*}
\frI_{1,\pm,J}(q,r,n) = \int_\BR \bigg(\frac{n_1^2n_2N}{q^3}\bigg)^{-i\tau} \gamma_\pm(-1/2+i\tau)\frI_1(q,r,\tau) W_J(\tau) d\tau.
\end{equation*}
Taking absolute values while keeping the $\tau$-integral inside,
\begin{equation*}
\begin{split}
S_{1,J}(N,C)\ll &t^\epsilon N^{1/2} \sum_\pm \sum_{\substack{1\leq L\ll N^2t^\epsilon/Q^3\\ dyadic}} \sum_{n_1, n_2} \frac{|\lambda(n_2,n_1)|}{n_2^{1/2}} U\bigg(\frac{n_1^2n_2}{L}\bigg) \\
&\times\bigg|\int_{\BR}(n_1^2n_2N)^{-i\tau}\gamma_{\pm}(-1/2+i\tau)\underset{\substack{C<q\leq 2C, (r,q)=1\\ 1\leq |r|\ll qt^{1+\epsilon}/N\\ n_1|q}}{\sum\sum} \frac{S(\overline{r},\pm n_2; q/n_1)}{aq^{3/2-3i\tau}}\frI_1(q,r,\tau)W_J(\tau) d\tau\bigg|.
\end{split}
\end{equation*}
Applying Cauchy-Schwarz inequality by pulling the $n_1, n_2$-sums outside along with the Ramanujan bound on average (Lemma \ref{Ram bound}),
\begin{equation}\label{main after cauchy}
S_{1,J}(N,C)\ll t^\epsilon N^{1/2} \sum_\pm \sum_{\substack{1\leq L\ll N^2t^\epsilon/Q^3\\ dyadic}} L^{1/2}[S_{1,\pm,J}(N,C,L)]^{1/2}
\end{equation}
where

\begin{equation*}
\begin{split}
S_{1,\pm,J}(N,C,L) = \sum_{n_1, n_2} \frac{1}{n_2}U\bigg(\frac{n_1^2n_2}{L}\bigg)\bigg|\int_{\BR}& (n_1^2n_2N)^{-i\tau}\gamma_{\pm}(-1/2+i\tau)\\
&\times \underset{\substack{C<q\leq 2C, (r,q)=1\\ 1\leq |r|\ll qt^{1+\epsilon}/N\\ n_1|q}}{\sum\sum} \frac{S(\overline{r},n_2; q/n_1)}{aq^{3/2-3i\tau}}\frI_1(q,r,\tau)W_J(\tau)d\tau\bigg|^2.
\end{split}
\end{equation*}
Like earlier, we consider only $S_{1,+,J}(N,C,L)$. Opening absolute value squared,

\begin{equation*}
\begin{split}
&S_{1,+,J}(N,C,L) = \sum_{n_1\leq2C}\underset{\BR^2}{\int\int} (n_1^2N)^{i(\tau_2-\tau_1)}\gamma_+(-1/2+i\tau_1)\overline{\gamma_+(-1/2+i\tau_2)}W_J(\tau_1)W_J(\tau_2) \\
&\times \underset{\substack{C<q_1\leq 2C, (r_1,q_1)=1\\ 1\leq |r_1|\ll Ct^{1+\epsilon}/N\\ n_1|q_1}}{\sum\sum}\ \underset{\substack{C<q_2\leq 2C, (r_2,q_2)=1\\ 1\leq |r_2|\ll Ct^{1+\epsilon}/N\\ n_1|q_2}}{\sum\sum} \frac{1}{a_1a_2q_1^{3/2-3i\tau_1}q_2^{3/2+3i\tau_2}} \frI_1(q_1,r_1,\tau_1)\overline{\frI_1(q_2,r_2,\tau_2)}\ \BT\ d\tau_1\ d\tau_2, 
\end{split}
\end{equation*}
where we temporarily set
\begin{equation*}
\BT= \sum_{n_2\in\BZ}\frac{1}{n_2^{1+i(\tau_1-\tau_2)}}U\bigg(\frac{n_1^2n_2}{L}\bigg)S(\overline{r_1},n_2; q_1/n_1)S(\overline{r_2},n_2; q_2/n_1).
\end{equation*}
Like before, breaking the $n_2$-sum $\bmod\ q_1q_2/n_1^2$ and applying Poisson summation to it
\begin{equation*}
\BT = \frac{n_1^2}{q_1q_2}\bigg(\frac{L}{n_1^2}\bigg)^{i(\tau_2-\tau_1)}\sum_{n_2\in\BZ}\frC\ U^\dagger(n_2L/q_1q_2, i(\tau_2-\tau_1)).
\end{equation*}
Here $\frC$ is the character sum as given in \eqref{character sum}. Since $|\tau_i|\ll Nt^\epsilon/CQ$, $U^\dagger$ gives arbitrary saving for $|n_2|\gg NCt^\epsilon/QL$. Recalling that $a\sim Q$, $S_{1,+,J}(N,C,L)$ is bounded by
\begin{equation}\label{S1jNCL}
\frac{t^\epsilon}{Q^2C^5} \sum_{n_1\leq2C} \underset{\substack{C<q_1\leq 2C, (r_1,q_1)=1\\ 1\leq |r_1|\ll q_1t^{1+\epsilon}/N\\ n_1|q_1}}{\sum\sum}\  \underset{\substack{C<q_2\leq 2C, (r_2,q_2)=1\\ 1\leq |r_2|\ll q_2t^{1+\epsilon}/N\\ n_1|q_2}}{\sum\sum}\ n_1^2 \sum_{|n_2|\ll NCt^\epsilon/QL} |\frC|\ |\frK| + O(t^{-2019}),
\end{equation}
where
\begin{equation*}
\begin{split}
\frK = \underset{\BR^2}{\int\int}& \gamma_+(-1/2+i\tau_1)\overline{\gamma_+(-1/2+i\tau_2)}W_J(\tau_1)W_J(\tau_2)\frac{(LN)^{i(\tau_2-\tau_1)}}{q_1^{-3i\tau_1}q_2^{3i\tau_2}}\\ &\times \frI_1(q_1,r_1,\tau_1)\overline{\frI_1(q_2,r_2,\tau_2)} U^\dagger(n_2L/q_1q_2, i(\tau_2-\tau_1))\ d\tau_1\ d\tau_2.
\end{split}
\end{equation*}

\subsection{Analysis of the integral $\frK$} Using the expression for $\frI_1(q,r,\tau)$ as given in Lemma \ref{sp lemma}
\begin{equation*}
\begin{split}
\frK = &|c_2|^2 \frac{r_1r_2a_1a_2}{t^2}\underset{\BR^2}{\int\int} \gamma_+(-1/2+i\tau_1)\overline{\gamma_+(-1/2+i\tau_2)}W_J(q_1,r_1,\tau_1)W_J(q_2,r_2,\tau_2)\frac{(LN)^{i(\tau_2-\tau_1)}}{q_1^{-3i\tau_1}q_2^{3i\tau_2}}\\
&\times \bigg(\frac{-(t+\tau_1)q_1}{2\pi eNr_1}\bigg)^{-i(t+\tau_1)}\bigg(\frac{-(t+\tau_2)q_2}{2\pi eNr_2}\bigg)^{i(t+\tau_2)}U^\dagger(n_2L/q_1q_2, i(\tau_2-\tau_1))\ d\tau_1\ d\tau_2,
\end{split}
\end{equation*}
where
\begin{equation*}
W_J(q,r,\tau) = \frac{t}{(t+\tau)^{3/2}}W_J(\tau) V_0\bigg(\frac{3}{2}, \frac{-q(t+\tau)}{2\pi Nr}\bigg).
\end{equation*}
Since $|\tau|\ll t^{1-\epsilon}$,
\begin{equation*}
\frac{\partial}{\partial \tau} W_J(q,r,\tau)\ll \frac{1}{t^{1/2}|\tau|}.
\end{equation*}

When $n_2=0$, the bounds on $U^\dagger(0, i(\tau_2-\tau_1))$ gives arbitrary saving if $|\tau_2-\tau_1|\gg t^{\epsilon}$. Recalling $a_i\sim Q$, $|r_i|\ll Ct^{1+\epsilon}/N$ and $|\tau_i|\ll Nt^\epsilon/QC$, the bound on $\frK$ in this case is
\begin{equation*}
\frK\ll \frac{QC}{Nt}t^\epsilon.
\end{equation*}

Next, we estimate $\frK$ when $n_2\neq0$. Applying \cite[Lemma 5]{Mun4}
\begin{equation*}
\begin{split}
U^\dagger\bigg(\frac{n_2L}{q_1q_2}, i(\tau_2-\tau_1)\bigg) = \frac{c_3}{(\tau_2-\tau_1)^{1/2}}&U\bigg(\frac{(\tau_2-\tau_1)q_1q_2}{2\pi n_2L}\bigg)\bigg(\frac{(\tau_2-\tau_1)q_1q_2}{2\pi en_2L}\bigg)^{i(\tau_2-\tau_1)} \\
&+ O\bigg(\min\bigg\{\frac{1}{|\tau_2-\tau_1|^{3/2}}, \frac{C^3}{(|n_2|L)^{3/2}}\bigg\}\bigg),
\end{split}
\end{equation*}
for some constant $c_3$ (which depends on the signs of $n_2$ and $(\tau_2-\tau_1)$). The contribution of this error term towards $\frK$ is
\begin{equation*}
O\bigg(t^\epsilon\frac{Q^2C^2}{N^2t}\underset{[J, 4J/3]}{\int\int}\min\bigg\{\frac{1}{|\tau_2-\tau_1|^{3/2}}, \frac{C^3}{(|n_2|L)^{3/2}}\bigg\}d\tau_1 d\tau_2\bigg).
\end{equation*}
Since $|\tau_i|\sim J\ll  Nt^\epsilon/QC$, the above is bounded by
\begin{equation*}
t^\epsilon\frac{Q^2C^2}{N^2t}\frac{C}{(|n_2|L)^{1/2}}J\ll \frac{QC^2}{Nt}\frac{t^\epsilon}{(|n_2|L)^{1/2}}.
\end{equation*}

We therefore set
\begin{equation}\label{Bstar}
B^\star(C,0) = \frac{QCt^\epsilon}{Nt}, \quad \text{ and }\quad B^\star(C,n_2) = \frac{QC^2t^\epsilon}{Nt}\frac{1}{(|n_2|L)^{1/2}} \quad (\text{for } n_2\neq0).
\end{equation}

Finally we analyze the main term contribution towards $\frK$ and proceed exactly as Munshi \cite{Mun4} does. By Fourier inversion
\begin{equation*}
\bigg(\frac{(\tau_2-\tau_1)q_1q_2}{2\pi n_2L}\bigg)^{-1/2}U\bigg(\frac{(\tau_2-\tau_1)q_1q_2}{2\pi n_2L}\bigg) = \int_\BR U^\dagger(y,1/2)e\bigg(\frac{(\tau_2-\tau_1)q_1q_2}{2\pi n_2L}y\bigg)\ dy. 
\end{equation*}
Using the above and the Stirling approximation \eqref{stirling gl3} for $\gamma_+(s)$
\begin{equation}\label{final integral}
\frK = c_4\frac{r_1r_2a_1a_2}{t^2}\bigg(\frac{q_1q_2}{|n_2|L}\bigg)^{1/2}\int_\BR U^\dagger(y,1/2)\underset{\BR^2}{\int\int} g(\tau_1,\tau_2) e(f(\tau_1,\tau_2))\ d\tau_1\ d\tau_2\ dy + O(B^\star(C,n_2)),
\end{equation}
where
\begin{equation*}
\begin{split}
2\pi f(\tau_1,\tau_2) = & 3\tau_1\log(|\tau_1|/e\pi) - 3\tau_2\log(|\tau_2|/e\pi) - (\tau_1-\tau_2)\log LN + 3\tau_1\log q_1 - 3\tau_2\log q_2 \\
&- (t+\tau_1)\log\bigg(\frac{-(t+\tau_1)q_1}{2\pi eNr_1}\bigg) + (t+\tau_2)\log\bigg(\frac{-(t+\tau_2)q_2}{2\pi eNr_2}\bigg)\\
& - (\tau_1-\tau_2)\log\bigg(\frac{(\tau_2-\tau_1)q_1q_2}{2\pi en_2L}\bigg) + \frac{(\tau_2-\tau_1)q_1q_2}{n_2L}y,
\end{split}
\end{equation*}
and
\begin{equation*}
g(\tau_1,\tau_2) = \Phi_+(\tau_1)\overline{\Phi_+(\tau_2)}W_J(q_1,r_1,\tau_1)W_J(q_2,r_2,\tau_2).
\end{equation*}
Then
\begin{equation*}
\begin{split}
&2\pi\frac{\partial^2}{\partial^2\tau_1}f(\tau_1,\tau_2) = \frac{3}{\tau_1} - \frac{1}{t+\tau_1} + \frac{1}{\tau_2-\tau_1}, \qquad 2\pi\frac{\partial^2}{\partial^2\tau_2}f(\tau_1,\tau_2) = \frac{-3}{\tau_2} + \frac{1}{t+\tau_2} + \frac{1}{\tau_2-\tau_1},\\
&2\pi\frac{\partial^2}{\partial\tau_1\partial\tau_2}f(\tau_1,\tau_2) = \frac{1}{\tau_1-\tau_2}.
\end{split}
\end{equation*}
Therefore
\begin{equation*}
4\pi^2\bigg[\frac{\partial^2}{\partial^2\tau_1}f(\tau_1,\tau_2)\frac{\partial^2}{\partial^2\tau_2}f(\tau_1,\tau_2) - \bigg(\frac{\partial^2}{\partial\tau_1\partial\tau_2}f(\tau_1,\tau_2)\bigg)^2\bigg] = \frac{-6}{\tau_1\tau_2} + O(t^\epsilon/Jt).
\end{equation*}

We notice that $\partial^2 f/\partial^2\tau_1=0$ for $\tau_2 = (2t-3\tau_1)\tau_1/(3t-4\tau_1)$, and is therefore small when $\tau_1=(2/3+o(1))\tau_2$. We however recall that $\tau_i\in[J,4J/3]$ (since $W_J$ is supported there) and $[J,4J/3]\cap[2J/3, 8J/9]=\emptyset$. Therefore $\partial^2 f/\partial^2\tau_1\gg 1/|\tau_i|$. The same argument justifies why $\partial^2 f/\partial^2\tau_2\gg 1/|\tau_i|$. Therefore the conditions of Lemma \ref{double expo sum} hold for above with $p_1=p_2=1/J^{1/2}$. Since $\Phi_+'(\tau)\ll |\tau|\-$ and $W'_J(q,r,\tau)\ll t^{-1/2}|\tau|\-$ (the derivative with respect to $\tau$), the total variation of $g(\tau_1, \tau_2)$ is bounded as $var(g)\ll t^{-1+\epsilon}$. By applying Lemma \ref{double expo sum}, the $\tau_1,\tau_2$-integral is bounded by $O(Jt^{-1+\epsilon})$. Therefore the contribution of the leading term of \eqref{final integral} towards $\frK$ is bounded by
\begin{equation*}
O\bigg(\frac{Q^2C^2}{N^2}\frac{C}{(|n_2|L)^{1/2}}\frac{Nt^\epsilon}{QCt}\bigg) = O(B^\star(C,n_2)),
\end{equation*}
where we have used $J\ll Nt^\epsilon/QC$. After this analysis, we bound the expression of $S_{1,+,J}(N,C,L)$ as given in \eqref{S1jNCL}.

\subsection{Diagonal contribution} We first consider the contribution of $n_2=0$, which we denote by\\ $S_{1,+,j}^\flat(N,C,L)$. Using the second statement of Lemma \ref{character sum lemma} and the bound of $B^\star(C,0)$ as given in \eqref{Bstar}
\begin{equation*}
S_{1,+,J}^\flat(N,C,L)\ll \frac{1}{Q^2C^5}\frac{QCt^\epsilon}{Nt}\sum_{n_1\leq 2C} \bigg\lbrace \frac{C^5t}{n_1^2N} + \frac{C^5t^2}{n_1N^2}\bigg\rbrace,
\end{equation*}
where the first term is the contribution from terms with $r_1=r_2$ and the second term is the contribution from $r_1\neq r_2$. Since we will choose $N>t$, the first term dominates and we get
\begin{equation}\label{main diagonal}
S_{1,+,j}^\flat(N,C,L)\ll \frac{Ct^{\epsilon}}{N^2Q}.
\end{equation}

\begin{Remark}\label{main_remark}
The diagonal contribution as given in \eqref{main diagonal} improves over Munshi's corresponding estimate in \cite[Section 6.2]{Mun4}. Munshi estimates,
$
S_{1,+,j}^\flat(N,C,L)\ll {t^{\epsilon}}/{N^{3/2}K^{1/2}C}
$ 
where $K=N/Q^2$. Therefore,
$
S_{1,+,j}^\flat(N,C,L)\ll {Qt^{\epsilon}}/{N^{2}C}
$
with $N/t^{1-\epsilon}<C<Q$. Therefore Munshi obtains
\begin{equation*}
S_{1,+,j}^\flat(N,C,L)\ll \frac{Qt^{1+\epsilon}}{N^{3}}.
\end{equation*}
This bound is worse than the bound we obtain
\begin{equation*}
S_{1,+,j}^\flat(N,C,L)\ll \frac{Ct^{\epsilon}}{N^{2}Q} \ll \frac{t^{\epsilon}}{N^{2}}.
\end{equation*} 
This improvement helps us to improve upon the subconvex estimate of Munshi \cite{Mun4}.
\end{Remark}

\subsection{Off-diagonal contribution} We now bound the contribution of terms with $n_2\neq0$, which we denote by $S_{1,+,j}^\sharp(N,C,L)$. Using the first statement of Lemma \ref{character sum lemma} and the bound on $B^\star(C,n_2)$ as given in \eqref{Bstar}
\begin{equation}\label{main off-diagonal}
\begin{split}
S_{1,+,j}^\sharp(N,C,L)&\ll \frac{t^\epsilon}{Q^2C^3} \sum_{n_1\leq2C} \underset{\substack{C<q_1\leq 2C, (r_1,q_1)=1\\ 1\leq |r_1|\ll q_1t^{1+\epsilon}/N\\ n_1|q_1}}{\sum\sum}\  \underset{\substack{C<q_2\leq 2C, (r_2,q_2)=1\\ 1\leq |r_2|\ll q_2t^{1+\epsilon}/N\\ n_1|q_2}}{\sum\sum}\ n_1^2 \sum_{1\leq|n_2|\ll CNt^\epsilon/QL} (q,n_2)B^\star(C,n_2)\\
&\ll \frac{C^{7/2}t^{1+\epsilon}}{Q^{3/2}N^{5/2}L}\ll \frac{C^2t^{1+\epsilon}}{N^{5/2}L}.
\end{split}
\end{equation}

\subsection{Estimating $S_{1,J}(N,C)$} Using the bounds \eqref{main diagonal} and \eqref{main off-diagonal} in \eqref{main after cauchy}
\begin{equation*}
\begin{split}
S_{1,+,J}(N,C)\ll t^\epsilon N^{1/2} \sum_{\substack{1\leq L\ll N^2t^\epsilon/Q^3\\ dyadic}} \bigg( \frac{L^{1/2}C^{1/2}}{Q^{1/2}N} + \frac{Ct^{1/2}}{N^{5/4}}\bigg)\ll t^\epsilon N^{1/2}\bigg(\frac{C^{1/2}}{Q^2}+\frac{Ct^{1/2}}{N^{5/4}}\bigg).
\end{split}    
\end{equation*}

Multiplying by $N$ and summing dyadically over $J\in \CJ$ and $1\leq C\leq Q$, the contribution of the above expression towards $S^+(N)$ is
\begin{equation}\label{S1}
\frac{N^{3/2}t^\epsilon}{Q^{3/2}} + QN^{1/4}t^{1/2+\epsilon}.
\end{equation}

We notice that the respective diagonal and off-diagonal estimates in the main term \eqref{S1} are equal to or bigger than those of the error term \eqref{S2} if $Q<N^{1/2}$. Under this assumption 
\begin{equation*}
S(N)\ll \frac{N^{3/2}t^\epsilon}{Q^{3/2}} + QN^{1/4}t^{1/2+\epsilon}.
\end{equation*}
The optimum choice of $Q$ is therefore $Q=N^{1/2}/t^{1/5}$. Thus the condition $Q<N^{1/2}$ is satisfied. Finally, we observe that the condition $N/t<Q$ is the same as $N/t<N^{1/2}/t^{1/5}$, which is always satisfied since $N<t^{3/2+\epsilon}$. Thus we get the final bound of $S(N)\ll N^{3/4}t^{3/10+\epsilon}$, which proves Proposition \ref{main prop gl3}.

\subsection*{Acknowledgements} The author would like to thank Prof. Roman Holowinsky for his encouragement and suggestions that improved the presentation of this paper.

\bibliographystyle{abbrv}
\bibliography{ref}

\end{document}